\documentclass[10pt]{amsart}
\usepackage[T1]{fontenc}
\usepackage[utf8]{inputenc}
\usepackage[english]{babel}
\usepackage{csquotes} 
\usepackage{color} 
\usepackage[a4paper,margin=3cm]{geometry}
\sloppy 
\allowdisplaybreaks
\usepackage[draft]{graphicx}
\usepackage[backend=biber,style=alphabetic,bibencoding=utf8,language=auto,autolang=other,giveninits=true,doi=false,isbn=false,url=false,maxnames=10,sorting=nty]{biblatex}
\usepackage{caption} % Better handling of empty figure captions.
\usepackage[hyperfootnotes=false]{hyperref} %See: https://github.com/plk/biblatex/issues/744
\hypersetup{
	colorlinks = true,
	linkcolor = {blue},
	urlcolor = {red},
	citecolor = {blue}
}
\usepackage[nameinlink,capitalise,sort]{cleveref}
\crefname{equation}{}{} % Makes cref behave like eqref.
\crefname{enumi}{}{} % Avoid showing ``item (1)''

\crefname{figure}{Figure}{Figures}

\usepackage[shortlabels]{enumitem}
\newlist{thmenum}{enumerate}{1} % to be used only inside 'theorem' environments
\setlist[thmenum]{label={(\roman*)},
                  ref=\thetheorem-{(\roman*)}}
\crefname{thmenumi}{Theorem}{Theorem}

\usepackage{amsmath}
\usepackage{amsthm}
\usepackage{amssymb}
\usepackage{esint} % For the average integral
\usepackage{mathrsfs} % Just for cool sigma-algebra
\usepackage{faktor} % For quotients
\usepackage{dsfont} % For identity.
\usepackage[dvipsnames]{xcolor}
\usepackage{array}
\usepackage{hhline}
\usepackage[normalem]{ulem}
\usepackage{enumitem} % Customize enumerate and itemize
\usepackage{comment} % For comment environment
\usepackage[toc,page]{appendix} % Appendices
\usepackage{imakeidx} % Analytical index
\usepackage{xparse} % In order to use ExplSyntax (in older latex versions)
\usepackage{mathtools}
\usepackage{fancyhdr} % Header and footer of pages
\usepackage{ifthen} % Ifthen construct
\usepackage{forloop} % forloop construct
\usepackage{xstring}
\usepackage{tikz}
\usetikzlibrary{positioning, arrows.meta}
\usetikzlibrary{babel} % Fix buggy interaction with babel package
\usetikzlibrary{cd} % Commutative diagrams
\usepackage{tikz-cd} 
\usepackage{emptypage} % Empty page are shown completely white.
\usepackage{listings} % Source code.
\usepackage{mathabx} % for widecheck
\usepackage{subcaption}

%% THEOREMS
\theoremstyle{plain}
\newtheorem{lemma}{Lemma}[section]
\newtheorem{proposition}[lemma]{Proposition}
\newtheorem{theorem}[lemma]{Theorem}

\newtheorem{remark}[lemma]{Remark}

\theoremstyle{definition}

\theoremstyle{remark}

\numberwithin{equation}{section}

%%%% Symbols %%%%

% Special letters %
\newcommand{\R}{\mathbb{R}}
\newcommand{\N}{\mathbb{N}}

% Corrected symbols %
\mathsurround=1pt
\mathchardef\emptyset="001F
\renewcommand{\d}{\mathrm{d}}

% Custom commands %

\newcommand{\mynorm}{{\vert\kern-0.25ex\vert\kern-0.25ex\vert}}

% Typesetting %
\newcommand{\dd}{\, \mathrm{d}}
\newcommand{\pt}{\partial_t}
\newcommand{\px}{\partial_x}
\newcommand{\pxx}{\partial^2_x}

\newcommand{\eps}{\varepsilon}
 \sloppy

%%%% DOCUMENT %%%%
\addbibresource{NonlocalCC-refs.bib}
%------------------------------------------------

\begin{document}

%%%% DOCUMENT %%%%

\title[Nonlocal conservation laws and compensated compactness]{Singular limit for a class of nonlocal conservation laws via compensated compactness}

\author[G.~M.~Coclite]{Giuseppe Maria Coclite}
\address[G.~M.~Coclite]{Politecnico di Bari, Dipartimento di Meccanica, Matematica e Management, Via E.~Orabona 4, 70125 Bari,  Italy.}
\email{giuseppemaria.coclite@poliba.it}

\author[N.~De Nitti]{Nicola De Nitti}
\address[N.~De Nitti]{Università di Pisa, Dipartimento di Matematica, Largo Bruno Pontecorvo 5, 56127 Pisa, Italy.}
\email[]{nicola.denitti@unipi.it}

\author[K.~Huang]{Kuang Huang}
\address[K.~Huang]{The Chinese University of Hong Kong, Department of Mathematics, Shatin, New Territories, Hong Kong SAR, P.\,R.~China.}
\email[]{kuanghuang@cuhk.edu.hk}

\keywords{Nonlocal conservation laws, nonlocal flux, nonlocal LWR model, traffic flow models, singular limit, nonlocal-to-local limit, compensated compactness.}

\subjclass[2020]{%
35L65, %	Hyperbolic conservation laws
35L03, %	Initial-value problems for first-order hyperbolic equations
35B40, %	Asymptotic behavior of solutions to PDEs
76A30%	Traffic and pedestrian flow models
.}

\begin{abstract}
We consider a class of nonlocal conservation laws modeling traffic flows, given by
\( \partial_t u_\varepsilon + \partial_x(V(u_\varepsilon \ast \gamma_\eps) u_\varepsilon) = 0\),  with a rescaled convolution kernel $\gamma_\eps(\cdot) \coloneqq \eps^{-1}\gamma(\cdot/\eps)$. We establish the strong  $\mathrm L^1_{\mathrm{loc}}$-convergence of weak solutions $u_\varepsilon$ toward the entropy-admissible solution of the corresponding local conservation law as the kernel $\gamma_\varepsilon$ concentrates to a Dirac delta distribution when $\varepsilon \searrow 0$. In contrast to previous literature, we obtain compactness of the family $\{u_\varepsilon \ast \gamma_\eps\}_{\varepsilon>0}$ without relying on total variation bounds or Ole\u{\i}nik-type estimates. Instead, we establish $\mathrm L^2$-type bounds on its entropy production and use the theory of compensated compactness, assuming that the initial datum merely belongs to $\mathrm L^1\cap \mathrm L^\infty$. Our results are twofold. First, we establish the nonlocal-to-local limit for the piecewise constant kernel $\gamma(\cdot) \coloneqq \mathds{1}_{[-1,0]}(\cdot)$ combined with the affine velocity function from Greenshields' traffic model. Second, we prove the limit for strictly monotone kernels along with decreasing velocity functions. These results settle a long-standing open problem concerning the nonlocal-to-local convergence for non-convex kernels.
\end{abstract}

\maketitle
%\tableofcontents

\section{Introduction}
\label{sec:intro}

\subsection{Nonlocal conservation laws}

\textit{Nonlocal conservation laws} appear in a wide variety of contexts, such as traffic flow, supply chains, crowd motion, opinion dynamics, spectral behavior of large random matrices, chemical and process engineering, sedimentation, granular erosion, materials with fading memory, and conveyor belt systems (see, e.\,g., \cite{KEIMER2023}).

In this work, we focus on a nonlocal variant of the \textit{Lighthill--Whitham--Richards (LWR) macroscopic traffic flow model} \cite{MR72606,MR75522}:  the traffic density $u:\R_+\times \R \to \R$ (with $\R_+\coloneqq [0,+\infty)$) solves the Cauchy problem 
\begin{align}\label{eq:cl}
\begin{cases}
\partial_t u_\eps(t,x)  + \partial_x\big(V\big(w_\varepsilon[u_\varepsilon](t,x)\big)\,u_\eps(t,x)\big) = 0, & t>0,\ x \in \R, \\
u_\eps(0,x) = u_0(x), & x \in \R,
\end{cases}
\end{align}
with $w_\varepsilon$ given in \cref{eq:wdef}.
 
The \textit{initial datum}\footnote{~With slight abuse of notation, we use $u_0$ to denote the initial datum, where the subscript does not correspond to $\eps=0$ in $u_\eps$.} $u_0:\R \to \R$ satisfies
\begin{align}\label{ass:u0}
    u_0 \in \mathrm L^1(\R) \cap \mathrm L^\infty(\R), \qquad 0 \le u_0 \le 1,
\end{align}
where the bounds represent normalized traffic states: \(u_0 = 0\) corresponds to an empty road, and \(u_0 = 1\) to bumper-to-bumper traffic. 

The \textit{velocity function} \(V:\R \to \R_+\) is sufficiently regular and non-increasing (i.\,e., the higher the density of cars on the road,  the lower their speed), that is,  
\begin{align}\label{ass:V}
    V \in \mathrm{Lip}([0,1]), \qquad V'(\xi) \le 0 \quad \text{for all }\xi \in [0,1].
\end{align}
It depends not on the traffic density $u_\eps$, but on the \emph{nonlocal impact}  
\begin{align}\label{eq:wdef}
w_\eps[u_\varepsilon] \coloneqq \gamma_\eps \ast u_\eps,
\end{align}
where the convolution is taken with respect to the space variable,  \(\eps>0\) is the \emph{nonlocal horizon parameter} acting as a characteristic length scale, and \(\gamma_\eps(\cdot) \coloneqq \eps^{-1}\gamma(\cdot/\eps)\) is a rescaled \emph{nonlocal kernel} with $\gamma$ satisfying
\begin{align}\label{ass:gamma}
\begin{aligned}
&\gamma \in \mathrm{BV}(\R), \qquad \|\gamma\|_{\mathrm L^1(\R)} = 1, \qquad \operatorname{supp}\gamma \subset ]-\infty,0], \\  
&\gamma \ge 0, \qquad  \gamma \ \text{non-decreasing in } ]-\infty,0].
\end{aligned}
\end{align}
For traffic flow modeling, it is natural to assume that \(\gamma\) is anisotropic and, in particular, supported in \(]-\infty,0]\) and non-decreasing: drivers adjust their speed based solely on the downstream traffic density (i.\,e., by looking forward) and give greater weight to regions closer to their current position.

Convolving \cref{eq:cl} with $\gamma_\varepsilon$ yields that the nonlocal impact $w_\eps$ (where we use the simplified notation $w_\eps \coloneqq w_\eps[u_\eps]$) satisfies the following evolution equation (in the strong sense): 
\begin{align} \label{eq:W}
  \pt w_\eps + \partial_x (V(w_\eps)w_\eps) =  \px \big(V(w_\eps)w_\eps - (V(w_\eps) u_\eps) \ast \gamma_\eps\big),
\end{align}
i.\,e., a conservation law with a \emph{local flux} and a \emph{nonlocal source} (in divergence form) that acts as a regularization term. 

Under the assumptions \crefrange{ass:u0}{ass:gamma}, there exists a unique distributional solution $u_{\varepsilon} \in \mathrm L^\infty\left(\mathbb{R}_{+} \times \mathbb{R}\right)$ of the Cauchy problem \cref{eq:cl} that satisfies the uniform $\mathrm L^\infty$-bound: 
\begin{align} \label{eq:linfty}
0 \leq u_{\varepsilon} \leq 1, \qquad 0 \leq w_{\varepsilon} \leq 1
\end{align}
and, additionally $u_{\varepsilon} \in \mathrm C^0\left(\mathbb{R}_{+}, \mathrm L_{\text {loc }}^1(\mathbb{R})\right)$. Furthermore, additional regularity of the initial datum is propagated: namely, if $u_0 \in \operatorname{Lip}(\mathbb{R})$, then $u_{\varepsilon} \in \operatorname{Lip}\left(\mathbb{R}_{+} \times \mathbb{R}\right)$. We refer, e.\,g., to \cite[Proposition 2.1]{2206.03949} (and the references cited therein, particularly \cite{zbMATH06756308,bressan2020traffic,zbMATH07615111}) for a proof of these claims.

\subsection{Nonlocal-to-local singular limit problem}

We are interested in the problem of the convergence of the families $\{u_\varepsilon\}_{\varepsilon >0}$ and $\{w_\varepsilon\}_{\varepsilon >0}$ to the (unique\footnote{~We refer to \cite{MR1304494,MR3443431,MR4789921} for the well-posedness of entropy solutions of \cref{eq:cl-l}. 
}) \textit{entropy solution} $u$ of the local conservation law 
\begin{align}\label{eq:cl-l}
\begin{cases}
\partial_t u(t,x)  + \partial_x\big(V\big(u(t,x))u(t,x)\big)   	= 0,	& t >0, \ x \in \R, \\
u(0,x) = u_0(x),	&  x\in\R.
\end{cases}
\end{align}

Owing to Banach--Alaoglu's theorem, from the $\mathrm L^\infty$-bound in \cref{eq:linfty}, we deduce that (up to subsequences) 
\begin{align} 
\label{eq:weakstar}
u_\varepsilon \overset{\ast}{\rightharpoonup} u, \
w_{\varepsilon} \overset{\ast}{\rightharpoonup} u \ \text{ weakly-$\ast$ in $\mathrm L^\infty\left(\mathbb{R}_{+} \times \mathbb{R}\right)$ as $\eps \searrow 0$,}
\end{align}
for some $u \in \mathrm L^\infty(\R_+\times\R)$. To prove that $u$ is a weak solution of the corresponding local Cauchy problem \cref{eq:cl-l},
this weak convergence is not enough because oscillations may occur; a sufficient condition, however, would be strong $\mathrm L^1_{\mathrm{loc}}$-convergence.

The classical approach to prove strong \( \mathrm L^1_{\mathrm{loc}} \)-convergence of a sequence is to prove that it is uniformly bounded in some appropriate function space of positive regularity. For example, if a uniform total variation (TV) bound holds, then strong compactness in \( \mathrm L^1_{\mathrm{loc}} \) follows from Helly's compactness theorem (see, e.\,g., \cite[Theorem 2.3, p.~14]{MR1816648}). 

The problem of establishing such TV-bounds has attracted much attention in recent years. This program was carried out in \cite[Theorem 1.1]{2206.03949}: the map $t \mapsto \mathrm{TV}(w_\eps(t,\cdot))$ was shown to be non-increasing under the additional condition\footnote{~On the other hand, \cite[Theorem 1.4]{2206.03949} shows that, without \cref{ass:convexity} (which is not entirely standard in traffic flow modeling), $t \mapsto \mathrm{TV}(w_\varepsilon(t,\cdot))$ may increase.\label{ft:convexity}} 
\begin{align}
\label{ass:convexity}
\gamma \text{ is convex in $]-\infty,0]$}
\end{align}
Therefore, if $\mathrm{TV}(u_0) < \infty$, we have that (up to subsequences)
\begin{align} \label{eq:goal}
w_{\varepsilon} \rightarrow u \quad  \text{ strongly in  $\mathrm L_{\mathrm{loc}}^1\left(\mathbb{R}_{+} \times \mathbb{R}\right)$ as $\eps \searrow 0$},
\end{align}
for some function $u \in \mathrm L^\infty(\R_+ \times \R)$ that is a weak solution of  \cref{eq:cl-l}. Moreover, as shown in \cite[Theorem 1.2]{2206.03949}, the strong convergence  \cref{eq:goal} is actually sufficient to conclude that $u$ is not only a weak solution, but the unique \emph{entropy-admissible} solution of \cref{eq:cl-l} and that the whole family $\{w_\varepsilon\}_{\varepsilon >0}$ converges as $\eps\searrow0$. 

The TV-estimate $\mathrm{TV}(w_\varepsilon(t,\cdot)) \le \mathrm{TV}(u_0)$ and the convergence result for $w_\eps$ were established earlier in \cite[Theorems 3.2 and 4.2]{MR4651679} in the particular case where $\gamma(\cdot) \coloneqq \mathds{1}_{]-\infty,0]}(\cdot)\exp(\cdot)$, leveraging the identity  
\begin{align}\label{eq:exp-identity}
\varepsilon\partial_x w_\varepsilon = w_\varepsilon - u_\varepsilon.
\end{align}
In this case, the combination of \cref{eq:exp-identity} and the \( \mathrm{TV} \)-estimate on $w_{\varepsilon}$ further allows one to deduce the convergence of the family \( \{u_\varepsilon\}_{\varepsilon > 0} \) to the same limit as \( \{w_\varepsilon\}_{\varepsilon > 0} \) (see \cite[Corollary 4.1]{MR4651679}) as $\eps\searrow0$. 
For this exponential kernel, initial data with unbounded variation can also be addressed using an Ole\u{\i}nik-type regularization effect, as demonstrated in \cite{MR4855163,MR4656976}, but for specific velocity functions. 

While the family  \( \{w_\varepsilon\}_{\varepsilon > 0} \) exhibits good stability and convergence properties, on the other hand, if the initial datum is not bounded away from zero, then, as observed in \cite{MR4300935}, establishing compactness properties of $\{u_{\varepsilon}\}_{\varepsilon >0}$ is difficult because the total variation of $u_\varepsilon$ may blow up. Nevertheless, strong convergence  were obtained in \cite{MR3944408}, for a large class of nonlocal conservation laws with monotone initial data, exploiting the fact that monotonicity is preserved throughout the evolution; and, in \cite{MR4300935}, under the assumptions that the initial datum has bounded total variation, is bounded away from zero, and satisfies a one-sided Lipschitz condition, and the kernel grows at most exponentially (that is, there exists $D>0$ such that \(\gamma(z) \le D \, \gamma'(z)\), for a.\,e.~$z \in ]-\infty, 0[$).  In \cite{MR4110434,MR4283539}, Bressan and Shen proved a convergence result for the exponential kernel $\gamma(\cdot) \coloneqq \mathds{1}_{]-\infty,0]}(\cdot)\exp(\cdot)$, provided that the initial datum is bounded away from zero and has bounded total variation, by reformulating the nonlocal conservation law as a hyperbolic system with a relaxation term. 

\subsection{Main results}

The main aim of this work is to prove \cref{eq:goal} without assuming TV-bounds on the initial data and relaxing the convexity assumption on $\gamma$. Instead, we will rely on $\mathrm L^2$-type  bounds on the \emph{entropy production}: 
\begin{align}\label{eq:goal-lemma}
\{\partial_t\eta\left(w_{\varepsilon}\right)+\partial_x q\left(w_{\varepsilon}\right)\}_{\varepsilon >0} \subset \text {compact set of } \mathrm W^{-1,2}_{\mathrm{loc}}(\R_+\times\R),
\end{align}
for any \textit{entropy} $\eta \in \mathrm C^2(\R)$ and corresponding \textit{entropy flux} $q \in \mathrm C^2(\R)$  defined by the relation $q^{\prime}(\xi)=\eta^{\prime}(\xi) \, (\xi \, V(\xi))'$. 

Our main result shows that the hypothesis \cref{eq:goal-lemma} is satisfied for some suitable choices of nonlocal kernels: the piecewise constant kernel $\gamma(\cdot) \coloneqq \mathds{1}_{[-1,0]}(\cdot)$ (if we consider the \emph{Greenshields velocity function}~\cite{greenshields1935study},  $V(\xi) \coloneqq 1 -\xi$) as well as any kernel satisfying the strict monotonicity condition $\gamma'(z)> 0$ for a.\,e.~$z <0$. 

\begin{theorem}[Compactness of the entropy production]\label{th:main}
    Let us assume that  the initial datum $u_0$ satisfies \cref{ass:u0}, the velocity function $V$ satisfies \cref{ass:V}, and the nonlocal kernel $\gamma$ satisfies \cref{ass:gamma}. Let $u_\varepsilon$ be the (unique) weak solution of \cref{eq:cl} and $w_\varepsilon \coloneqq\gamma_\varepsilon \ast u_\varepsilon$  the corresponding nonlocal impact.
 \begin{thmenum} 
    \item \label{it:constant} If we assume $\gamma(\cdot) \coloneqq \mathds{1}_{[-1,0]}(\cdot)$ and $V(\xi) \coloneqq 1 -\xi$, then $w_\varepsilon$ satisfies \cref{eq:goal-lemma} for any entropy $\eta \in \mathrm C^2(\R)$.
    \item \label{it:derivative} If we assume that
 \begin{align}\label{ass:gamma-strict}
    \gamma'(z)> 0, \qquad \text{for a.\,e. } z <0,
    \end{align}
    then $w_\varepsilon$ satisfies \cref{eq:goal-lemma}  for any entropy $\eta \in \mathrm C^2(\R)$.
    \item \label{it:exp} If, in particular, $\gamma(\cdot) \coloneqq \mathds{1}_{]-\infty,0]}(\cdot)\, \exp(\cdot)$ (which satisfies \cref{ass:gamma-strict})  and $\sup_{\xi \in [0,1]}V'(\xi)\le - v_\ast < 0$, then also $u_\varepsilon$ satisfies \cref{eq:goal-lemma}  for any entropy $\eta \in \mathrm C^2(\R)$ such that $\eta(0) = 0$.
 \end{thmenum} 
\end{theorem}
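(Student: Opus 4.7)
My plan is to apply Murat's compactness lemma: since the $\mathrm L^\infty$-bound \eqref{eq:linfty} makes $\{\partial_t \eta(w_\varepsilon) + \partial_x q(w_\varepsilon)\}_{\varepsilon>0}$ uniformly bounded in $\mathrm W^{-1,\infty}_{\mathrm{loc}}$, it suffices, for each $\varepsilon$, to decompose the distribution as the sum of a sequence precompact in $\mathrm W^{-1,2}_{\mathrm{loc}}$ and one bounded in the space of Radon measures $\mathcal M_{\mathrm{loc}}$. Starting from the nonlocal-source formulation \eqref{eq:W} for $w_\varepsilon$ and multiplying by $\eta'(w_\varepsilon)$ (legitimate since $w_\varepsilon$ is spatially smooth), one obtains
\begin{equation*}
\partial_t \eta(w_\varepsilon) + \partial_x q(w_\varepsilon) = \partial_x\bigl(\eta'(w_\varepsilon)\,E_\varepsilon\bigr) - \eta''(w_\varepsilon)\,E_\varepsilon\,\partial_x w_\varepsilon,
\end{equation*}
where $E_\varepsilon \coloneqq V(w_\varepsilon)\,w_\varepsilon - \bigl(V(w_\varepsilon)\,u_\varepsilon\bigr)\ast \gamma_\varepsilon$. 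The first summand is a spatial divergence and lies in a compact of $\mathrm W^{-1,2}_{\mathrm{loc}}$ as soon as $\eta'(w_\varepsilon)\,E_\varepsilon\to 0$ strongly in $\mathrm L^2_{\mathrm{loc}}$; the second is bounded in $\mathcal M_{\mathrm{loc}}$ as soon as $E_\varepsilon\,\partial_x w_\varepsilon$ is bounded in $\mathrm L^1_{\mathrm{loc}}$. Both reductions collapse to a single $\mathrm L^2$-type estimate on $E_\varepsilon$.

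The dissipation identity supplying this estimate is obtained by choosing $\eta(\xi)=\xi^2/2$ and integrating in $x$: after integration by parts (boundary terms at infinity drop since $w_\varepsilon(t,\cdot)\in \mathrm L^1\cap\mathrm L^\infty$), one finds
\begin{equation*}
\frac{\mathrm d}{\mathrm d t}\int_\R \tfrac{1}{2}\,w_\varepsilon^2 \,\mathrm d x + \int_\R E_\varepsilon\,\partial_x w_\varepsilon \,\mathrm d x = 0,
\end{equation*}
so that $\int_0^T\!\!\int_\R E_\varepsilon\,\partial_x w_\varepsilon\,\mathrm d x\,\mathrm d t \le \tfrac{1}{2}\|u_0\|_{\mathrm L^2}^2$. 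The heart of the proof is a coercivity inequality of the schematic form $E_\varepsilon\,\partial_x w_\varepsilon \ge c\,|E_\varepsilon|^2 - R_\varepsilon$, with $R_\varepsilon$ bounded (and ideally vanishing as $\varepsilon\to 0$) in $\mathrm L^1_{\mathrm{loc}}$. Using the consistency relation $w_\varepsilon=\gamma_\varepsilon\ast u_\varepsilon$, $E_\varepsilon$ admits the working representation
\begin{equation*}
E_\varepsilon(x) = \int \gamma_\varepsilon(y)\,u_\varepsilon(x-y)\,\bigl[V(w_\varepsilon(x))-V(w_\varepsilon(x-y))\bigr]\,\mathrm d y,
\end{equation*}
and the three cases diverge only in how this representation is exploited.

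In case (i), the structure $V'\equiv-1$ collapses $E_\varepsilon$ to $(u_\varepsilon w_\varepsilon)\ast\gamma_\varepsilon - w_\varepsilon^2$, while $\partial_x w_\varepsilon$ becomes the discrete difference quotient $\varepsilon^{-1}\bigl(u_\varepsilon(\cdot+\varepsilon)-u_\varepsilon\bigr)$; a direct discrete integration by parts against the indicator kernel, together with a Jensen-type remainder exploiting that $\gamma_\varepsilon$ is a probability kernel, yields the coercivity. In case (ii), I would write $w_\varepsilon(x)-w_\varepsilon(x-y)=\int_{-y}^0 \partial_x w_\varepsilon(x+s)\,\mathrm d s$ together with the mean-value form $V(w_\varepsilon(x))-V(w_\varepsilon(x-y))=-\Phi_\varepsilon(x,y)\,(w_\varepsilon(x)-w_\varepsilon(x-y))$ with $\Phi_\varepsilon\ge 0$; after a Fubini rearrangement and an integration by parts in $y$, the strict positivity $\gamma'>0$ provides the definite-sign weight needed for the quadratic lower bound. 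In case (iii), the identity \eqref{eq:exp-identity} gives at once $\|\sqrt{\varepsilon}\,\partial_x w_\varepsilon\|_{\mathrm L^2}\le C$ and $\|u_\varepsilon-w_\varepsilon\|_{\mathrm L^2}=O(\sqrt{\varepsilon})$; rewriting $E_\varepsilon$ entirely in terms of $w_\varepsilon$ and $\varepsilon\partial_x w_\varepsilon$ via \eqref{eq:exp-identity} then yields $\|E_\varepsilon\|_{\mathrm L^2}=O(\sqrt{\varepsilon})$ directly. The additional compactness of the entropy production for $u_\varepsilon$ itself follows by running the same argument on the $u_\varepsilon$-equation: the coercivity $V'\le -v_\ast<0$ produces a dissipation term of the right sign, while the normalization $\eta(0)=0$ absorbs the boundary contribution at spatial infinity, which is necessary because $u_\varepsilon$ has no a priori decay beyond $u_\varepsilon\in\mathrm L^1$.

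The main obstacle is establishing the coercivity inequality in each case: the positive constant $c$ degenerates as $\gamma$ or $V$ becomes flatter, which is precisely why case (ii) requires strict monotonicity of $\gamma$ and case (iii) requires $V'\le -v_\ast<0$. Case (i) accommodates a non-monotone $\gamma$ only by virtue of the algebraic simplifications specific to Greenshields' model, which turn $E_\varepsilon$ into an explicit covariance against the probability kernel $\gamma_\varepsilon$.
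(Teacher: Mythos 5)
Your high-level scaffolding matches the paper's: Murat's lemma (\cref{lm:Murat}), a splitting of the entropy production into a spatial divergence to be controlled in $\mathrm L^2_{\mathrm{loc}}$ plus a signed dissipation bounded in $\mathcal M_{\mathrm{loc}}$, with the quadratic entropy balance supplying the a priori bound. The gap is that you replace the entire technical content by an unsubstantiated ``coercivity inequality'' $E_\varepsilon\,\partial_x w_\varepsilon \ge c\,|E_\varepsilon|^2 - R_\varepsilon$. The paper never works with the pointwise product $E_\varepsilon\,\partial_x w_\varepsilon$: it rewrites the non-divergence remainder as $\partial_x\big((u_\varepsilon H_\eta)\ast\gamma_\varepsilon\big) - (u_\varepsilon H_\eta)\ast\gamma'_\varepsilon$ with $H_\eta(a\mid b)=\int_a^b\eta''(z)\big(V(z)-V(b)\big)\,\mathrm dz\ge 0$, so the sign comes from $\gamma'\ge0$, and the real problem is to pass from the dissipation bound weighted by $\gamma'_\varepsilon$ (\cref{eq:diss}) to smallness of the same quantity weighted by $\gamma_\varepsilon$. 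That passage is exactly where the three hypotheses enter, and your sketch does not supply it in any of the three cases.

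Concretely: (a) in case (i) the dissipation only controls the cubic, degenerate quantity $\int u_\varepsilon^2\,(u_\varepsilon-u_\varepsilon(\cdot+\varepsilon))\lesssim\varepsilon$ (\cref{eq:ueps_L2}); since $u_0$ need not be bounded away from zero, ``discrete integration by parts plus a Jensen remainder'' cannot yield the quadratic bound $\int(u_\varepsilon-u_\varepsilon(\cdot+\varepsilon))^2\lesssim\sqrt\varepsilon$ that is actually needed. The paper obtains it from the Hardy--Littlewood rearrangement inequality (\cref{lm:hardy}) applied to $F(\xi)=\delta^{-1}\xi^2-\max\{\xi,\delta\}+\delta$, followed by the optimization $\delta=\sqrt\varepsilon$; nothing in your proposal substitutes for this step. (b) In case (ii) there is no uniform coercivity constant to be had: the hypothesis is only $\gamma'>0$ a.e., with $\gamma'$ possibly arbitrarily small, so any argument predicated on a fixed $c>0$ degenerating ``as $\gamma$ becomes flatter'' fails precisely on the kernels the theorem covers. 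The paper instead runs a qualitative splitting over $A_{\delta,M}=\{-M\le s\le 0,\ \gamma'(s)\ge\delta\}$ and its complement, using dominated convergence and the diagonal limit $\varepsilon=o(\delta)$, $\delta\searrow0$, $M\to\infty$; one gets compactness without a rate. (c) In case (iii) the claim $\|u_\varepsilon-w_\varepsilon\|_{\mathrm L^2}=O(\sqrt\varepsilon)$ does not follow from \cref{eq:exp-identity}: the dissipation gives only the weighted bound $\int\varepsilon^{-1}V'(w_\varepsilon)\,(u_\varepsilon-w_\varepsilon)^2(u_\varepsilon+w_\varepsilon)$ bounded, which degenerates where $u_\varepsilon+w_\varepsilon$ vanishes. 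The hypothesis $\eta(0)=0$ has nothing to do with boundary terms at infinity; it is what forces $|\eta(u_\varepsilon)|+|P(\xi_\varepsilon)|\lesssim u_\varepsilon+w_\varepsilon$, so that the square of the flux difference can be absorbed by that degenerate weight. In short, the proposal identifies the right frame but leaves all three load-bearing estimates unproved or misattributed.
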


Owing to Tartar's \textit{compensated compactness theory} (which we will recall in \cref{sec:cc}), the compactness of the entropy production in \cref{eq:goal-lemma}, combined with a suitable \textit{nonlinearity} assumption of the flux $\xi \mapsto \xi \, V(\xi)$, yields \cref{eq:goal}.

\begin{theorem}[Convergence]\label{th:convergence}
        Let us assume that the initial datum $u_0$ satisfies \cref{ass:u0}, the velocity function $V$ satisfies \cref{ass:V}, the flux  
   \begin{align}\label{eq:f}
        \xi \mapsto \xi\, V(\xi) \text{ is  not affine on any non-trivial interval},
    \end{align}
and the nonlocal kernel $\gamma$ satisfies \cref{ass:gamma}. 

Let $u_\varepsilon$ be the (unique) weak solution of \cref{eq:cl} and $w_\varepsilon \coloneqq \gamma_\varepsilon \ast u_\varepsilon$ the corresponding nonlocal impact. 

 \begin{thmenum} 
    \item \label{it:conv-w} Under the assumptions of \cref{it:constant} or \cref{it:derivative}, we have that the family $\{w_\varepsilon\}_{\varepsilon >0}$ converges strongly in $\mathrm L^p_{\mathrm{loc}}(\R_+\times\R)$, for $1 \le p < \infty$, to the (unique) entropy solution $u$ of the Cauchy problem \cref{eq:cl-l}. 
\item \label{it:conv-u} Under the assumptions of \cref{it:exp}, we also have 
that the family $\{u_\varepsilon\}_{\varepsilon >0}$ converges strongly in $\mathrm L^p_{\mathrm{loc}}(\R_+\times\R)$, for $1 \le p < \infty$, to the (unique) entropy solution $u$ of the Cauchy problem \cref{eq:cl-l}.
\end{thmenum}
\end{theorem}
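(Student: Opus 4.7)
The plan is to apply Tartar's compensated compactness theorem (recalled in \cref{sec:cc}) to the family $\{w_\eps\}_{\eps>0}$. I would verify its three standard hypotheses: the uniform $\mathrm L^\infty$-bound $0\le w_\eps\le 1$ from \cref{eq:linfty}; the $\mathrm W^{-1,2}_{\mathrm{loc}}$-compactness of the entropy production \cref{eq:goal-lemma}, which is precisely the content of \cref{it:constant,it:derivative} of \cref{th:main}; and the genuine nonlinearity of the flux $\xi\mapsto \xi V(\xi)$, guaranteed by assumption \cref{eq:f}. This yields, along a subsequence, almost-everywhere convergence of $w_\eps$ to some limit $\tilde u\in \mathrm L^\infty(\R_+\x\R)$, which together with the $\mathrm L^\infty$-bound upgrades to strong convergence in $\mathrm L^p_{\mathrm{loc}}(\R_+\x\R)$ for every $1\le p<\infty$; the weak-$\ast$ identification in \cref{eq:weakstar} then forces $\tilde u=u$.

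Next, I would pass to the limit directly in the nonlocal equation $\pt u_\eps + \px\bigl(V(w_\eps) u_\eps\bigr) = 0$. Since $V$ is Lipschitz, the strong convergence $w_\eps\to u$ in $\mathrm L^p_{\mathrm{loc}}$ transfers to strong convergence $V(w_\eps)\to V(u)$, while $u_\eps\wstar u$ weakly-$\ast$ in $\mathrm L^\infty$ by \cref{eq:weakstar}; the product $V(w_\eps) u_\eps$ therefore converges to $V(u) u$ in the sense of distributions, so $u$ is a weak solution of \cref{eq:cl-l}. Entropy admissibility is then provided by \cite[Theorem~1.2]{2206.03949}, whose hypothesis is precisely the strong $\mathrm L^1_{\mathrm{loc}}$-convergence of $\{w_\eps\}$ we have just established. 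Uniqueness of the Kru\v{z}kov entropy solution finally promotes subsequential convergence to convergence of the full family, completing \cref{it:conv-w}.

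For \cref{it:conv-u}, the assumptions of \cref{it:exp} also yield $\mathrm W^{-1,2}_{\mathrm{loc}}$-compactness of the entropy production of $u_\eps$ itself. I would then apply Tartar's theorem a second time, now directly to $\{u_\eps\}_{\eps>0}$, with the same local flux $\xi\mapsto\xi V(\xi)$, the same nonlinearity condition \cref{eq:f}, and the same $\mathrm L^\infty$-bound; this produces a strong $\mathrm L^p_{\mathrm{loc}}$-limit $\bar u$, which the weak-$\ast$ identification in \cref{eq:weakstar} again forces to equal $u$.

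The real difficulty in this program lies entirely upstream, in proving the entropy production bound \cref{eq:goal-lemma} — that is, in \cref{th:main} itself. Once that tool is available, the convergence statement reduces to a clean application of compensated compactness combined with a standard passage to the limit in the equation, the only bookkeeping subtlety being the asymmetry between the strong convergence of $w_\eps$ and the merely weak-$\ast$ convergence of $u_\eps$ when identifying the limit flux.
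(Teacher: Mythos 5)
Your proposal is correct and follows essentially the same route as the paper: apply the compensated compactness proposition (\cref{prop:cc}) to $\{w_\varepsilon\}_{\varepsilon>0}$ (and, under the assumptions of \cref{it:exp}, also to $\{u_\varepsilon\}_{\varepsilon>0}$) using the $\mathrm L^\infty$-bound, the entropy-production compactness from \cref{th:main}, and the nonlinearity assumption \cref{eq:f}; then invoke \cite[Theorem~1.2]{2206.03949} to identify the limit as the entropy solution and use uniqueness to upgrade subsequential to full-family convergence. Your additional explicit passage to the limit in the nonlocal equation is harmless but redundant, since the cited theorem already converts the strong convergence of $w_\varepsilon$ into the entropy-admissibility of the limit.
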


The novelty of \cref{th:convergence} is twofold. First, we establish the nonlocal-to-local limit for the piecewise constant kernel $\gamma(\cdot) \coloneqq \mathds{1}_{[-1,0]}(\cdot)$ combined with the affine velocity function from Greenshields' traffic model. Second, we prove the limit for strictly monotone kernels along with decreasing velocity functions. 

These results settle a long-standing open problem concerning the nonlocal-to-local convergence for non-convex kernels.

As noted later in \cref{rk:conv-u}, the assumption \(V' \le -v_\ast < 0\), which is needed in \cref{it:exp}, can be removed from \cref{it:conv-u}. Indeed, we can  bypass \cref{it:exp} and instead combine \cref{it:derivative} with an appropriate time-contraction estimate based on \cref{eq:exp-identity} to conclude.

\subsection{Outline of the paper}
In \cref{sec:cc}, we recall the main lemmas in the theory of compensated compactness. In \cref{sec:entropy}, we derive the entropy balance for $w_\varepsilon$ (and, in the case of the exponential kernel, also for $u_\varepsilon$). We then address the problem of estimating the entropy production and prove \crefrange{it:constant}{it:exp} in \crefrange{sec:constant}{sec:exp}, respectively. In \cref{sec:proof}, we put together the proof of \cref{th:main} and deduce \cref{th:convergence}. Finally, in \cref{sec:conclusion}, we conclude the paper with a summary of the main results and possible directions for future research.

\section{Compensated compactness}
\label{sec:cc}

Our starting point is the uniform $\mathrm L^\infty$-bound \cref{eq:linfty} and the subsequent weak convergence result in \cref{eq:weakstar}. We seek to apply the theory of \emph{compensated compactness} developed by Tartar \cite{MR584398,MR725524} and Murat \cite{MR506997}, which provides a description of the weak limits and in certain cases the conditions under which weak limits become strong.

From \cref{eq:weakstar}, by \cite[p.~147]{MR658130}, there exists a subsequence $\{w_{\varepsilon_k}\}_{k \in \N}$ and an associated family of probability measures, \emph{Young measures}, $\left\{\nu_{(t,x)}(\lambda): \, (t,x) \in \R_+ \times \R, \ \lambda \in\R\right\}$ such that, for any $f \in \mathrm C(\R)$, 
$$
\left(\lim _{\varepsilon_k \rightarrow 0} f\left(w_{\varepsilon_k}\right)\right)
(t,x)=\left\langle\nu_{(t,x)}, f(\lambda)\right\rangle=\int f(\lambda) \, \d \nu_{(t,x)}(\lambda), \text { for a.\,e.~$(t,x) \in \R_+ \times \R$,}
$$
where the limit is taken in the weak-$\ast$ topology of $\mathrm L^\infty(\R_+\times\R)$. The spread of the support of $\nu_{(t,x)}$ is of crucial importance: indeed, strong convergence corresponds to the statement that $\nu_{(t,x)}$ is a point mass almost everywhere (see~\cite[p.~154]{MR658130}): 
$w_\eps \to u$ strongly in $\mathrm L^p_{\mathrm{loc}}(\R_+\times\R)$, for $1 \leq p<\infty$, if and only if $\nu_{(t,x)}=\delta_{u(t,x)}$ for a.\,e.~$(t,x) \in \R_+ \times \R$.

In order to show that the  measures associated to $w_\eps$ reduce to a point mass, we rely on the ``additional structure'' provided by entropy inequalities: the mechanism of entropy dissipation
 quenches the high-frequency oscillations in the sequence $\{w_\eps\}_{\eps >0}$, thus enforcing strong convergence of the approximating solutions.

The main result that we will use can be stated in the following proposition (see, e.\,g., \cite[Chapter II, Theorem 1.1]{MR658130} and \cite[Chapter 8]{MR4789921}). 

\begin{proposition}[Compensated compactness]\label{prop:cc}
Let $f \in \mathrm{Lip}(\R)$. Let us suppose that $\left\{w_{\varepsilon}\right\}_{\eps>0}$ is a sequence of functions such that $w_{\varepsilon}   \overset{\ast}{\rightharpoonup} u$  weakly-$\ast$ in $\mathrm L^\infty\left(\R_+ \times \R\right)$ and, for all convex $\eta \in \mathrm C^2(\R)$,
\begin{align*}%\label{eq:goal-lemma}
\{\partial_t\eta\left(w_{\varepsilon}\right)+\partial_x q\left(w_{\varepsilon}\right)\}_{\varepsilon >0} \subset \text {compact set of } \mathrm W^{-1,2}_{\mathrm{loc}}(\R_+\times\R),
\end{align*}
where $q$ is defined by the relation $q^{\prime}(\xi)=\eta^{\prime}(\xi) f^{\prime}(\xi)$. Then
$$
f\left(w_{\varepsilon}\right)  \overset{\ast}{\rightharpoonup}  f(u), \quad \text{ weakly-$\ast$ in } \mathrm L^\infty_{\mathrm{loc}}(\R_+\times\R).
$$

If, in addition, $f$ is genuinely nonlinear---i.\,e., there is no interval on which $f$ is affine---, then 
$$
w_{\varepsilon} \rightarrow u \quad \text { strongly in } \mathrm L^p_{\mathrm{loc}}(\R_+\times\R), \ 1 \le p < +\infty
$$
(up to extracting a subsequence, if needed).
\end{proposition}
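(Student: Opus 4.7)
The plan is to follow Tartar's compensated-compactness strategy and reduce the problem to analyzing the Young measure $\{\nu_{(t,x)}\}$ associated to $\{w_\varepsilon\}$. Since $\{w_\varepsilon\}$ is bounded in $\mathrm L^\infty(\R_+\times\R)$, its values lie in a fixed compact interval $K \subset \R$, so the fundamental theorem on Young measures yields, along a subsequence, a family of probability measures $\nu_{(t,x)}$ supported in $K$ such that $g(w_\varepsilon) \overset{\ast}{\rightharpoonup} \langle \nu_{(t,x)}, g\rangle$ weakly-$\ast$ in $\mathrm L^\infty_{\mathrm{loc}}(\R_+\times\R)$ for every $g \in \mathrm C(\R)$; in particular $u(t,x) = \langle \nu_{(t,x)}, \xi\rangle$. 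A preliminary step is to extend the entropy-production control from convex $\mathrm C^2$ entropies to all $\mathrm C^2$ entropies: on $K$ one may write $\eta = (\eta + M\xi^2/2) - M\xi^2/2$ as a difference of convex $\mathrm C^2$ functions for $M > \|\eta''\|_{\mathrm L^\infty(K)}$, so by linearity $\{\partial_t\eta(w_\varepsilon) + \partial_x q(w_\varepsilon)\}_\varepsilon$ is compact in $\mathrm W^{-1,2}_{\mathrm{loc}}$ for any $\mathrm C^2$ entropy $\eta$ with associated flux $q$.

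The next step is to apply the Murat--Tartar div-curl lemma. For any two $\mathrm C^2$ entropy pairs $(\eta^{(1)}, q^{(1)})$ and $(\eta^{(2)}, q^{(2)})$, I would consider the two-dimensional fields
\begin{equation*}
U_\varepsilon = \big(\eta^{(1)}(w_\varepsilon),\, q^{(1)}(w_\varepsilon)\big), \qquad V_\varepsilon = \big(-q^{(2)}(w_\varepsilon),\, \eta^{(2)}(w_\varepsilon)\big)
\end{equation*}
on $\R_+\times\R$ in coordinates $(t,x)$. Both are uniformly bounded in $\mathrm L^\infty_{\mathrm{loc}} \subset \mathrm L^2_{\mathrm{loc}}$, while $\mathrm{div}_{t,x}\, U_\varepsilon = \partial_t\eta^{(1)}(w_\varepsilon) + \partial_x q^{(1)}(w_\varepsilon)$ and $\mathrm{curl}_{t,x}\, V_\varepsilon = \partial_t\eta^{(2)}(w_\varepsilon) + \partial_x q^{(2)}(w_\varepsilon)$ each lie in a compact subset of $\mathrm W^{-1,2}_{\mathrm{loc}}$ by the extended hypothesis. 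The div-curl lemma then yields weak-$\ast$ convergence of $U_\varepsilon \cdot V_\varepsilon$ to the product of the weak-$\ast$ limits, which in terms of the Young measure reads
\begin{equation*}
\langle \nu_{(t,x)},\, q^{(1)}\eta^{(2)} - \eta^{(1)} q^{(2)}\rangle = \langle \nu, q^{(1)}\rangle \langle \nu, \eta^{(2)}\rangle - \langle \nu, \eta^{(1)}\rangle \langle \nu, q^{(2)}\rangle
\end{equation*}
for a.e.~$(t,x)$. This is Tartar's fundamental commutation identity.

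The final and hardest step is to exploit this identity to pin down the structure of $\nu_{(t,x)}$. Following the classical scalar argument of Tartar and DiPerna, I would take $(\eta^{(1)}, q^{(1)}) = (\xi, f)$ and probe with $\eta^{(2)}$ ranging over a sufficiently rich parametric family---for instance, $\mathrm C^2$-regularizations of the Kruzhkov entropies $|\xi - a|$ or the family $(\xi - a)_+^2$ indexed by $a \in \R$---to conclude that, for a.e.~$(t,x)$, the support of $\nu_{(t,x)}$ is contained in an interval on which $f$ is affine. This immediately implies $\langle \nu, f\rangle = f(\langle \nu, \xi\rangle)$, i.e., $f(w_\varepsilon) \overset{\ast}{\rightharpoonup} f(u)$ in $\mathrm L^\infty_{\mathrm{loc}}$, giving the first conclusion without invoking genuine nonlinearity. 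Under the additional assumption that $f$ is not affine on any nontrivial interval, no such interval of affinity can exist, so $\supp(\nu_{(t,x)})$ collapses to a single point and $\nu_{(t,x)} = \delta_{u(t,x)}$ for a.e.~$(t,x)$; this is equivalent to the strong $\mathrm L^p_{\mathrm{loc}}$-convergence $w_\varepsilon \to u$ for $1 \le p < \infty$. I expect the passage from the commutation identity to the affinity of $f$ on $\supp(\nu_{(t,x)})$ to be the main technical obstacle; this is the core of Tartar's theorem, while the surrounding Young-measure bookkeeping is standard.
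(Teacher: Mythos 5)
The paper does not actually prove \cref{prop:cc}; it is quoted as a known result with references to \cite{MR658130} and \cite{MR4789921}, so there is no ``paper proof'' to match against. Your outline reproduces the classical Tartar--DiPerna argument that those references contain, and its architecture is sound: the Young-measure representation, the extension from convex to arbitrary $\mathrm C^2$ entropies by writing $\eta=(\eta+M\xi^2/2)-M\xi^2/2$ on the compact range of $\{w_\varepsilon\}$ and using linearity of the entropy-production functional, the div-curl pairing of $(\eta^{(1)},q^{(1)})$ with $(-q^{(2)},\eta^{(2)})$ (your signs are correct: $\mathrm{div}\,U_\varepsilon$ and $\mathrm{curl}\,V_\varepsilon$ are both entropy productions), and the resulting commutation identity are all standard and correct.

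The one genuine gap is the step you yourself flag: passing from the commutation identity to the conclusion that $\supp\nu_{(t,x)}$ lies in an interval on which $f$ is affine. For Lipschitz $f$ that is genuinely nonlinear but \emph{not} convex, this is not a routine ``probe with Kruzhkov entropies'' computation --- it is precisely the content of the refinement the paper attributes to \cite{MR1073959} (following Tartar's original treatment of the strictly convex case in \cite{MR584398}), and the argument there requires a careful analysis of the measure $\mu=(f-\bar f)\,\nu-(\xi-\bar u)\,\tilde\nu$-type identities obtained by substituting $(\eta^{(1)},q^{(1)})=(\xi,f)$, not merely a choice of test entropies. As written, your proposal asserts the conclusion of that reduction rather than deriving it, so the proof is incomplete at exactly the point where the theorem lives. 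Everything surrounding that step is correct bookkeeping; to close the argument you would either carry out the reduction for the class of fluxes at hand or, as the paper does, invoke the cited results directly.
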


In \cite{MR584398}, moreover, it was shown that, if $f$ is strictly convex, then proving \cref{eq:cl-l} for a single $\mathrm C^2$ strictly convex entropy suffices; later, in \cite{MR1073959}, this result was refined: even if $f$ is genuinely nonlinear, but not necessarily convex, strong convergence can be obtained by using just one $\mathrm C^2$ (not necessarily convex) entropy.

A key tool often used to establish \cref{eq:goal-lemma} is Murat's compact embedding theorem (see \cite{M} or \cite[Lemma 17.2.2]{Dafermos}).

\begin{lemma}[Murat's compact embedding]  \label{lm:Murat}
Let $\Omega$ be an open bounded set of $\mathbb{R}^N$ and let $\left\{\varphi_n\right\}_{n \in \N}$ be a sequence of distributions satisfying
\begin{enumerate}[label=\arabic*.]
    \item $\left\{\varphi_n\right\}_{n \in \N}$ is a bounded subset of $\mathrm W^{-1, r}(\Omega)$ for some $r>2$; 
    \item $\varphi_n=q_n+\chi_n$, where $\left\{q_n\right\}_{n \in \N}$ is relatively compact in $\mathrm W^{-1,2}(\Omega)$ and $\left\{\chi_n\right\}_{n \in \N}$ is bounded in the space of Radon measures $\mathcal M(\Omega)$ (endowed with the norm given by the total variation of measures).
\end{enumerate} 
Then, $\left\{\varphi_n\right\}_{n \in \N}$ is relatively compact in $\mathrm W^{-1,2}(\Omega)$.
\end{lemma}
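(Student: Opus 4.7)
The plan is to find an exponent $s \in (1,2)$ for which $\{\varphi_n\}_{n \in \N}$ is relatively compact in $\mathrm W^{-1,s}(\Omega)$, and then to bootstrap this to relative compactness in $\mathrm W^{-1,2}(\Omega)$ by interpolating against the a priori $\mathrm W^{-1,r}$-bound with $r > 2$. The key engine is the interpolation estimate
\[
\|\varphi\|_{\mathrm W^{-1,2}(\Omega)} \le C\, \|\varphi\|_{\mathrm W^{-1,s}(\Omega)}^{\theta}\, \|\varphi\|_{\mathrm W^{-1,r}(\Omega)}^{1-\theta},
\]
with $\theta \in (0,1)$ determined by $\frac{1}{2} = \frac{\theta}{s} + \frac{1-\theta}{r}$. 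Granting this, a Cauchy-sequence argument completes the proof: given any subsequence of $\{\varphi_n\}_{n \in \N}$, the $\mathrm W^{-1,s}$-compactness yields a further subsequence that is Cauchy in $\mathrm W^{-1,s}$, and since the pairwise differences remain uniformly bounded in $\mathrm W^{-1,r}$, the displayed inequality forces the same subsequence to be Cauchy in $\mathrm W^{-1,2}$, hence convergent there.

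To obtain $\mathrm W^{-1,s}$-compactness for a well-chosen $s$, I would handle the two summands separately. For $\{q_n\}_{n \in \N}$: on the bounded domain $\Omega$, the continuous Sobolev embedding $\mathrm W^{1,s'}_0(\Omega) \hookrightarrow \mathrm H^1_0(\Omega)$ (valid whenever $s' \ge 2$, i.\,e.\ $s \le 2$) dualises to $\mathrm W^{-1,2}(\Omega) \hookrightarrow \mathrm W^{-1,s}(\Omega)$, so relative compactness in $\mathrm W^{-1,2}$ transfers immediately to $\mathrm W^{-1,s}$ for every $s \in (1,2]$. For $\{\chi_n\}_{n \in \N}$: Morrey's inequality gives the embedding $\mathrm W^{1,s'}_0(\Omega) \hookrightarrow \mathrm C^{0,\alpha}(\bar\Omega)$ whenever $s' > N$, and Arzel\`a--Ascoli upgrades this to the compact embedding $\mathrm W^{1,s'}_0(\Omega) \hookrightarrow\hookrightarrow \mathrm C^0(\bar\Omega)$; applying Schauder's theorem and dualising yields the compact embedding $\mathcal M(\Omega) \hookrightarrow\hookrightarrow \mathrm W^{-1,s}(\Omega)$ for every $s$ with $1 \le s < N/(N-1)$ (the constraint being void when $N = 1$). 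Since $\{\chi_n\}_{n \in \N}$ is bounded in $\mathcal M(\Omega)$, it is thus relatively compact in $\mathrm W^{-1,s}(\Omega)$ for such $s$. Any choice
\[
s \in \bigg(1,\ \min\Big\{2,\ \frac{N}{N-1}\Big\}\bigg)
\]
then makes both $\{q_n\}_{n \in \N}$ and $\{\chi_n\}_{n \in \N}$, and hence $\{\varphi_n\}_{n \in \N}$, relatively compact in $\mathrm W^{-1,s}(\Omega)$.

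The delicate point is the interpolation estimate for \emph{negative} Sobolev spaces, which is not an immediate consequence of classical $\mathrm L^p$-interpolation. One justifies it either by representing each $\varphi \in \mathrm W^{-1,p}(\Omega)$ as $f_0 - \sum_{i=1}^{N} \partial_i f_i$ with $f_i \in \mathrm L^p(\Omega)$, endowing $\mathrm W^{-1,p}$ with the infimum of $\sum_i \|f_i\|_{\mathrm L^p}$ over such representations, and then invoking Riesz--Thorin on the $\mathrm L^p$-level; or by quoting the complex-interpolation identity $[\mathrm W^{-1,s}(\Omega),\mathrm W^{-1,r}(\Omega)]_\theta = \mathrm W^{-1,p_\theta}(\Omega)$, with $\frac{1}{p_\theta} = \frac{\theta}{s} + \frac{1-\theta}{r}$, specialised to $p_\theta = 2$. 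A minor bookkeeping issue is to ensure that the duality between $\mathcal M(\Omega)$ and $\mathrm C^0$-type test functions is applied consistently on $\bar\Omega$ versus $\Omega$; since $\Omega$ is bounded and $\{\chi_n\}_{n \in \N}$ is bounded in total variation, this causes no difficulty, and any convergent subsequence can be identified with an element of $\mathrm W^{-1,s}(\Omega)$ without ambiguity.
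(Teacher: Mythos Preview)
The paper does not give its own proof of this lemma; it merely states it and refers to Murat's original article and to Dafermos's textbook. Your argument is precisely the standard proof found in those references: first obtain relative compactness of $\{\varphi_n\}$ in $\mathrm W^{-1,s}(\Omega)$ for some $s\in(1,\min\{2,N/(N-1)\})$ by combining the trivial embedding $\mathrm W^{-1,2}\hookrightarrow \mathrm W^{-1,s}$ (for the $q_n$) with the compact embedding $\mathcal M(\Omega)\hookrightarrow\hookrightarrow \mathrm W^{-1,s}(\Omega)$ obtained by dualising $\mathrm W^{1,s'}_0\hookrightarrow\hookrightarrow \mathrm C^0(\bar\Omega)$ via Schauder (for the $\chi_n$); then interpolate between $\mathrm W^{-1,s}$ and $\mathrm W^{-1,r}$ to upgrade to $\mathrm W^{-1,2}$. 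The proof is correct, and your identification of the interpolation step for negative-order spaces as the only genuinely technical point is accurate.
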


\section{Entropy balance}
\label{sec:entropy}

For a convex \textit{entropy} $\eta \in \mathrm C^2(\R)$ and the corresponding \textit{entropy flux} $q\in \mathrm C^2(\R)$ given by $q'(\xi) = \eta'(\xi) (\xi \, V(\xi))' $, we can write the \emph{entropy balance} from \cref{eq:W} as 
\begin{align}\label{eq:balance-prelim}
\pt \eta(w_\eps) + \partial_x q(w_\eps) = \eta'(w_\eps)  \px \big(V(w_\eps)w_\eps - (V(w_\eps) u_\eps) \ast \gamma_\eps\big).
\end{align}

Following \cite{DeNittiKuang2025}, we will decompose \cref{eq:balance-prelim} into a divergence form part and a remainder term with a definite sign.

\begin{proposition}[Entropy balance for $w_\varepsilon$]\label{prop:energy-g}
Let us assume that the initial data \(u_0\) satisfies \cref{ass:u0}, the velocity function \(V\) satisfies \cref{ass:V}, and the nonlocal kernel \(\gamma\) satisfies \cref{ass:gamma}. Then the following identity holds in the sense of distributions on \(\mathbb{R}_+ \times \mathbb{R}\):
\begin{align}\label{eq:entropy-balance}
\begin{aligned}
 \pt \eta(w_\eps) + \partial_x q(w_\eps) &= \px \big(\eta'(w_\eps)   \big(V(w_\eps)w_\eps - (V(w_\eps)  u_\eps) \ast \gamma_\eps\big)\big) \\ & \quad +
     \partial_x \big( \big(  u_\eps(t,\cdot) H_\eta(w_\eps(t,x)\mid w_\eps(t,\cdot)) \big) \ast\gamma_\eps \big) \\ & \quad  - \big(  u_\eps(t,\cdot) H_\eta(w_\eps(t,x)\mid w_\eps(t,\cdot)) \big) \ast\gamma'_\eps,
\end{aligned}
\end{align}
where \(\eta \in \mathrm C^2(\mathbb{R})\) and 
\begin{align}
 \label{def:Heta}   H_\eta(a\mid b) &\coloneqq  I_\eta(b) - I_\eta(a) - V(b)(\eta'(b)-\eta'(a)) \ge 0, \\
\label{def:Ieta} I_\eta'(\xi) &\coloneqq  \eta''(\xi)V(\xi).
\end{align}

Moreover, if $\eta$ is convex, the function $H_\eta$ satisfies $H_\eta(a\mid b)\geq H_\eta(b\mid b)=0$ for all $a,b\in\mathbb{R}$ and 
\begin{align} \label{eq:Heta-estimate}
    0 \leq H_\eta(a,b) \leq \|\eta''\|_{\mathrm{L}^\infty} \int_a^b V(z)-V(b)\,\d z \leq \|\eta''\|_{\mathrm{L}^\infty} \|V\|_{\mathrm{L}^\infty}.
\end{align}

In particular, integrating over $[0,T] \times \R$, if $\eta(\xi) \ge 0$ for $\xi \in [0,1]$, we have 
\begin{align}\label{eq:diss-general}
\int_0^T \int_{\mathbb R}\big(  u_\eps(t,\cdot) H_\eta(w_\eps(t,x)\mid w_\eps(t,\cdot)) \big) \ast\gamma'_\eps \, \mathrm d t \le \int_{\mathbb R} \eta(u_0) \, \mathrm d x.
\end{align}

For the quadratic entropy \(\eta(\xi) \coloneqq \frac{1}{2} \xi^2\), we have
\begin{align}\label{eq:entropy-balance-quadratic}
 \begin{aligned}
 \partial_t \left( \tfrac{1}{2} w_\varepsilon^2 \right)
 + \partial_x q(w_\varepsilon)
 &= \partial_x \left[ w_\varepsilon \left( V(w_\varepsilon) w_\varepsilon - \left( V(w_\varepsilon)  u_\varepsilon \right) \ast \gamma_\varepsilon \right) \right] \\
 &\quad + \partial_x \left[ \left(  u_\varepsilon(t,\cdot)\, \int_{w_\varepsilon(t,x)}^{w_\varepsilon(t,\cdot)} (V(z) - V(w_\varepsilon(t,\cdot)))\, \mathrm{d}z \right) \ast \gamma_\varepsilon \right] \\
 &\quad - \left(  u_\varepsilon(t,\cdot)\, \int_{w_\varepsilon(t,x)}^{w_\varepsilon(t,\cdot)} (V(z) - V(w_\varepsilon(t,\cdot)))\, \mathrm{d}z \right) \ast \gamma'_\varepsilon,
 \end{aligned}
 \end{align}
and, integrating over $[0,T] \times \R$, 
\begin{align}\label{eq:diss}
\int_{0}^T \int_{\mathbb{R}} \left(  u_\varepsilon(t,\cdot)\, \int_{w_\varepsilon(t,x)}^{w_\varepsilon(t,\cdot)} (V(z) - V(t,w_\varepsilon(t,\cdot)))\, \mathrm{d}z \right) \ast \gamma'_\varepsilon \, \mathrm  dx \, \mathrm d t 
\le C \|u_0\|_{\mathrm L^2(\mathbb{R})}^2.
 \end{align}
\end{proposition}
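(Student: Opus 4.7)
\textbf{Proof plan for Proposition \ref{prop:energy-g}.} The starting point is to multiply \cref{eq:W} by $\eta'(w_\eps)$, which yields the preliminary entropy balance \cref{eq:balance-prelim}. To recast the right-hand side into a divergence plus a signed remainder, I first rewrite the flux correction using $w_\eps=\gamma_\eps\ast u_\eps$: since $V(w_\eps(t,x))w_\eps(t,x)=\int V(w_\eps(t,x))u_\eps(t,y)\gamma_\eps(x-y)\dd y$, we obtain the compact representation
\begin{equation*}
F_\eps(t,x)\coloneqq V(w_\eps)w_\eps - (V(w_\eps)u_\eps)\ast\gamma_\eps = \int \bigl(V(w_\eps(t,x))-V(w_\eps(t,y))\bigr) u_\eps(t,y)\gamma_\eps(x-y)\dd y.
\end{equation*}
Applying the Leibniz rule gives $\eta'(w_\eps)\px F_\eps = \px\bigl(\eta'(w_\eps)F_\eps\bigr)-\eta''(w_\eps)F_\eps\px w_\eps$, so the first divergence term in \cref{eq:entropy-balance} appears for free and the real task becomes identifying the remainder $-\eta''(w_\eps)F_\eps\px w_\eps$ with the last two terms in \cref{eq:entropy-balance}.

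The central computation—and the main obstacle—is to verify the identity
\begin{equation*}
\px\bigl((u_\eps(t,\cdot)H_\eta(w_\eps(t,x)\mid w_\eps(t,\cdot)))\ast\gamma_\eps\bigr) - (u_\eps(t,\cdot)H_\eta(w_\eps(t,x)\mid w_\eps(t,\cdot)))\ast\gamma'_\eps = -\eta''(w_\eps)F_\eps\px w_\eps,
\end{equation*}
which is precisely where the relative-entropy form of $H_\eta$ is tailored to cancel the bad term. The mechanism is transparent once one computes $\partial_a H_\eta(a\mid b) = -I'_\eta(a) + V(b)\eta''(a) = \eta''(a)\bigl(V(b)-V(a)\bigr)$ directly from \cref{def:Heta,def:Ieta}. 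Differentiating the convolution $G(t,x)\coloneqq (u_\eps(t,\cdot)H_\eta(w_\eps(t,x)\mid w_\eps(t,\cdot)))\ast\gamma_\eps$ in $x$ produces two contributions: the derivative lands either on $H_\eta$ (through its first argument) or on $\gamma_\eps$. The first contribution equals $\eta''(w_\eps(t,x))\px w_\eps(t,x)\int(V(w_\eps(t,y))-V(w_\eps(t,x)))u_\eps(t,y)\gamma_\eps(x-y)\dd y = -\eta''(w_\eps)F_\eps\px w_\eps$, while the second is the convolution with $\gamma'_\eps$. Rearranging yields the identity, and substitution into the preliminary balance delivers \cref{eq:entropy-balance}.

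Next, for the sign and size of $H_\eta$, integrating the identity $\partial_a H_\eta(a\mid b)=\eta''(a)(V(b)-V(a))$ from $a$ to $b$ and using $H_\eta(b\mid b)=0$ produces the integral representation $H_\eta(a\mid b)=\int_a^b \eta''(z)(V(z)-V(b))\dd z$. With $\eta$ convex and $V$ non-increasing, the integrand and the orientation of the interval conspire to give $H_\eta(a\mid b)\ge 0$ in both orderings of $a,b$, and the chain $0\le H_\eta\le \|\eta''\|_{\mathrm L^\infty}\int_a^b(V(z)-V(b))\dd z\le \|\eta''\|_{\mathrm L^\infty}\|V\|_{\mathrm L^\infty}$ in \cref{eq:Heta-estimate} follows by estimating $|\eta''|$ and $|V(z)-V(b)|$ on $[0,1]$.

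Finally, to obtain \cref{eq:diss-general}, I integrate \cref{eq:entropy-balance} over $[0,T]\times\R$: the two spatial divergences vanish by the $\mathrm L^1$-decay provided by \cref{eq:linfty} together with $u_0\in\mathrm L^1$, while the time term yields $\int_\R\eta(w_\eps(T,\cdot))\dd x-\int_\R\eta(w_\eps(0,\cdot))\dd x$. Dropping the first integral using $\eta\ge 0$ on $[0,1]$ and applying Jensen's inequality to $\eta(\gamma_\eps\ast u_0)\le \gamma_\eps\ast \eta(u_0)$ (valid since $\eta$ is convex and $\gamma_\eps$ is a probability density) bounds the initial contribution by $\int_\R\eta(u_0)\dd x$, which is precisely \cref{eq:diss-general}. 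The statements \cref{eq:entropy-balance-quadratic,eq:diss} then arise by specializing $\eta(\xi)=\tfrac12\xi^2$, so that $\eta'(w_\eps)=w_\eps$, $\eta''\equiv 1$, and $H_\eta(a\mid b)=\int_a^b (V(z)-V(b))\dd z$, with constant $C=1/2$.
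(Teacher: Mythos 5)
Your proposal is correct and follows essentially the same route as the paper: multiply \cref{eq:W} by $\eta'(w_\eps)$, peel off the divergence $\px(\eta'(w_\eps)F_\eps)$, and identify the remainder $-\eta''(w_\eps)F_\eps\,\px w_\eps$ with the two $H_\eta$-terms via $\partial_a H_\eta(a\mid b)=\eta''(a)\bigl(V(b)-V(a)\bigr)$, then read off the sign of $H_\eta$ from the representation $\int_a^b\eta''(z)(V(z)-V(b))\,\d z$ and conclude \cref{eq:diss-general} by integration and Jensen's inequality. The only difference is cosmetic: you verify the central identity by differentiating the $H_\eta$-convolution directly (given the candidate $H_\eta$), whereas the paper derives it constructively by splitting the remainder into commutator terms $\mathcal I_{2a,\eps}$ and $\mathcal I_{2b,\eps}$; everything else, including the quadratic specialization with $C=1/2$, matches.
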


\begin{proof} \uline{Step 1.} \emph{Entropy balance.} Multiplying the equation \cref{eq:W} by \(\eta'(u_\varepsilon)\) and applying the chain rule (justified, for instance, by an approximation argument, as in \cite[Theorem 3.1]{MR4651679} or \cite[Lemma 3.1]{MR4855163}), we compute: 
\begin{align*}
    \pt \eta(w_\eps) + \partial_x q(w_\eps) &= \eta'(w_\eps)  \px \big(V(w_\eps)w_\eps - (V(w_\eps)  u_\eps) \ast \gamma_\eps\big)
    \\&= \px \big(\eta'(w_\eps)   \big(V(w_\eps)w_\eps - (V(w_\eps)  u_\eps) \ast \gamma_\eps\big)\big) \\ & \qquad - \eta''(w_\eps)   \px w_\eps \big(V(w_\eps)w_\eps - (V(w_\eps)  u_\eps) \ast \gamma_\eps\big)
    \\ &\eqqcolon \mathcal I_{1,\eps} + \mathcal I_{2,\eps}. 
\end{align*}
We introduce the function 
\[ I_\eta'(\xi) \coloneqq  \eta''(\xi)V(\xi)\] and rewrite $\mathcal I_{2,\eps}$:
\begin{align*}
    \mathcal I_{2,\eps} &= \partial_x \eta'(w_\eps) ((V(w_\eps)  u_\eps) \ast \gamma_\eps) - \partial_x I_\eta(w_\eps) w_\eps \\
    &= \partial_x \eta'(w_\eps) ((V(w_\eps)  u_\eps) \ast \gamma_\eps) - \partial_x I_\eta(w_\eps) ( u_\eps \ast \gamma_\eps) \\ &\eqqcolon \mathcal I_{2a,\eps}- \mathcal I_{2b,\eps}.
\end{align*}
In turn, we write 
\begin{align*}
    \mathcal I_{2a,\eps}
    &= \partial_x \big( \eta'(w_\eps) (V(w_\eps)  u_\eps) \ast \gamma_\eps - (\eta'(w_\eps) V(w_\eps)  u_\eps) \ast \gamma_\eps \big) \\
    &\quad - \big( (\eta'(w_\eps) V(w_\eps)  u_\eps) \ast \gamma'_\eps - (\eta'(w_\eps) V(w_\eps)  u_\eps) \ast \gamma'_\eps \big);
\\
    \mathcal I_{2b,\eps}&= \partial_x \big( I_\eta(w_\eps)   (u_\eps \ast \gamma_\eps) - (I_\eta(w_\eps)  u_\eps) \ast \gamma_\eps \big) \\ &\qquad - \big( I_\eta(w_\eps)  (u_\eps \ast \gamma'_\eps) - (I_\eta(w_\eps)  u_\eps) \ast \gamma'_\eps \big).
\end{align*}
By introducing function 
\[H_\eta(a\mid b) \coloneqq  I_\eta(b) - I_\eta(a) - V(b)(\eta'(b)-\eta'(a)),
\]
we can write
\begin{align*}
    \mathcal I_{2,\eps}   &= \partial_x\int_x^\infty  u_\eps(t,y) H_\eta(w_\eps(t,x)\mid w_\eps(t,y)) \gamma_\eps(x-y)\,\mathrm dy \\ &\qquad- \int_x^\infty  u_\eps(t,y) H_\eta(w_\eps(t,x)\mid w_\eps(t,y)) \gamma'_\eps(x-y)\,\mathrm dy
\\ &= \partial_x \big( \big(  u_\eps(t,\cdot) H_\eta(w_\eps(t,x)\mid w_\eps(t,\cdot)) \big) \ast\gamma_\eps \big) - \big(  u_\eps(t,\cdot)  H_\eta(w_\eps(t,x)\mid w_\eps(t,\cdot)) \big) \ast\gamma'_\eps. \\
\end{align*}
In conclusion, 
\begin{align*}
 \pt \eta(w_\eps) + \partial_x q(w_\eps) &= \px \big(\eta'(w_\eps)   \big(V(w_\eps)w_\eps - (V(w_\eps)  u_\eps) \ast \gamma_\eps\big)\big) \\ & \quad +
     \partial_x \big( \big(  u_\eps(t,\cdot) H_\eta(w_\eps(t,x)\mid w_\eps(t,\cdot)) \big) \ast\gamma_\eps \big) \\ & \quad  - \big(  u_\eps(t,\cdot) H_\eta(w_\eps(t,x)\mid w_\eps(t,\cdot)) \big) \ast\gamma'_\eps.
\end{align*}
where
\begin{align*}
  H_\eta(a\mid b) &\coloneqq  I_\eta(b) - I_\eta(a) - V(b)(\eta'(b)-\eta'(a)) \ge 0, \\
 I_\eta'(\xi) &\coloneqq  \eta''(\xi)V(\xi).
\end{align*}

\uline{Step 2.} \emph{Properties of $H_\eta$.} By the fundamental theorem of calculus,
\[
I_\eta(b) - I_\eta(a) = \int_a^b \eta''(z)V(z)\,\mathrm{d}z.
\]
Hence, from \eqref{def:Heta},
\[
H_\eta(a\mid b)
= \int_a^b \eta''(z)V(z)\,\mathrm{d}z
- V(b)(\eta'(b)-\eta'(a))
= \int_a^b \eta''(z)\big(V(z)-V(b)\big)\,\mathrm{d}z.
\]
In particular, we have $H(a\mid b) \ge H(b\mid b) = 0$ for all $a,\, b \in \R$. Finally, we can estimate
\[
0 \le H_\eta(a\mid b)
\le \|\eta''\|_{\mathrm{L}^\infty}\int_a^b |V(z)-V(b)|\,\mathrm{d}z
\le \|\eta''\|_{\mathrm{L}^\infty}\|V\|_{\mathrm{L}^\infty}.
\]

\end{proof}

\begin{remark}[Entropy balance for $w_\varepsilon$ (piecewise constant kernel)]\label{rk:entropy-constant-gamma}
    In the particular case of the kernel $\gamma(\cdot) \coloneqq \mathds{1}_{[-1,0]}(\cdot)$, the entropy balance in \cref{eq:entropy-balance} reduces to  
\begin{align}\label{eq:entropy-prod-const}
\begin{aligned}
&\partial_t \eta(w_\varepsilon(t,x)) + \partial_x q(w_\varepsilon(t,x))
\\ &= \partial_x \left[ \eta'(w_\varepsilon(t,x)) 
\cdot \int_{-1}^0 \left( V(w_\varepsilon(t,x)) - V(w_\varepsilon(t,x - \varepsilon s)) \right)
 u_\varepsilon(t,x - \varepsilon s)\, \mathrm{d}s \right] \\
&\quad + \partial_x \left[ \int_{-1}^0 
 u_\varepsilon(t,x - \varepsilon s)\, H_\eta(w_\varepsilon(t,x) \mid w_\varepsilon(t,x - \varepsilon s))\, \mathrm{d}s \right] \\
&\quad - \frac{1}{\varepsilon}  u_\varepsilon(t,x + \varepsilon)\, H_\eta\left( w_\varepsilon(t,x) \mid w_\varepsilon(t,x + \varepsilon) \right).
\end{aligned}
\end{align}
If we take $\eta(\xi) = \xi^2/2$, 
\begin{align}\label{eq:entropy-prod-const-quad}
\begin{aligned}
&\partial_t \left( \tfrac{1}{2} w_\varepsilon(t,x)^2 \right)
+ \partial_x \left(q(w_\varepsilon(t,x)) \right)
\\ &= \partial_x \left[ w_\varepsilon(t,x)\, 
\int_{-1}^0 \left( V(w_\varepsilon(t,x)) - V(w_\varepsilon(t,x - \varepsilon s)) \right)
 u_\varepsilon(t,x - \varepsilon s)\, \mathrm{d}s \right] \\
&\quad + \partial_x \left[ \int_{-1}^0 
 u_\varepsilon(t,x - \varepsilon s) \left( \int_{w_\varepsilon(t,x)}^{w_\varepsilon(t,x - \varepsilon s)} \left( V(z) - V(w_\varepsilon(t,x - \varepsilon s)) \right)\, \mathrm{d}z \right) \mathrm{d}s \right] \\
&\quad - \frac{1}{\varepsilon}\,  u_\varepsilon(t,x + \varepsilon)\, \int_{w_\varepsilon(t,x)}^{w_\varepsilon(t,x + \varepsilon)} \left( V(z) - V(w_\varepsilon(t,x + \varepsilon)) \right)\, \mathrm{d}z
\end{aligned}
\end{align}
and, integrating over $[0,T] \times \R$, 
\begin{align}\label{eq:diss-bound-const}
   0 \le \int_{0}^T \int_{\mathbb R}\frac{1}{\varepsilon}\,  u_\varepsilon(t,x + \varepsilon)\, \int_{w_\varepsilon(t,x)}^{w_\varepsilon(t,x + \varepsilon)} \left( V(z) - V(w_\varepsilon(t,x + \varepsilon)) \right)\, \mathrm{d}z \, \mathrm{d}x \, \mathrm d t \le C_0. 
\end{align}

Finally, in the case of the velocity function $V(\xi)\coloneqq1-\xi$, we have 
\begin{align*}
    \partial_t \left(\frac12u_\eps^2\right) = -u_\eps \partial_x(u_\eps(1-w_\eps)) = -\partial_x\left(\frac12u_\eps^2(1-w_\eps)\right) + \frac12u_\eps^2 \partial_xw_\eps,
\end{align*}
where $\partial_xw_\eps = \frac1\eps(u_\eps(\cdot+\eps)-u_\eps)$, which implies
\[ \int_\R \frac12u_0(x)^2 \,\d x - \int_\R \frac12u_\eps(T,x)^2 \,\d x = \int_0^T \int_\R \frac1{2\eps} u_\eps(t,x)^2 (u_\eps(t,x) - u_\eps(t,x+\eps)) \,\d x, \]
and thus
\begin{align}\label{eq:ueps_L2}
    \int_0^T \int_\R u_\eps(t,x)^2 (u_\eps(t,x) - u_\eps(t,x+\eps)) \,\d x \,\d t \leq \eps \|u_0\|_{\mathrm{L}^2(\R)}^2 \eqqcolon C_0 \eps,
\end{align}
for any $T>0$. We remark that, since $u_\eps$ may have discontinuities, these derivations hold, up to an approximation argument by adding a viscosity term or by smoothing the initial data (as in \cite[Theorem 3.1]{MR4651679} or \cite[Lemma 3.1]{MR4855163})).

\end{remark}

In case \(\gamma(\cdot) \coloneqq \mathds{1}_{]-\infty,0]}(\cdot)\, \exp(\cdot)\), we can rely on the identity \cref{eq:exp-identity} to deduce a suitable entropy balance for $u_\varepsilon$ as well.

\begin{proposition}[Entropy balance for $u_\varepsilon$ (exponential kernel)]\label{prop:energy-exp}
Let us assume that the initial data \(u_0\) satisfies \cref{ass:u0}, the velocity function \(V\) satisfies \cref{ass:V}, and the nonlocal kernel \(\gamma(\cdot) \coloneqq \mathds{1}_{]-\infty,0]}(\cdot)\, \exp(\cdot)\). Then the following identity holds in the sense of distributions on \(\mathbb{R}_+ \times \mathbb{R}\):
\begin{align}\label{eq:entropy-exp1}
\begin{aligned}
\partial_t \eta(u_\varepsilon) + \partial_x q(u_\varepsilon)
&= \partial_x \Big( \eta(u_\varepsilon) (V(u_\varepsilon) - V(w_\varepsilon)) + Q(u_\varepsilon) - Q(w_\varepsilon) \Big)\\ 
&\qquad + \frac{1}{\varepsilon} V'(w_\varepsilon)\big(P(w_\varepsilon) - P(u_\varepsilon)\big)(w_\varepsilon - u_\varepsilon),
\end{aligned}
\end{align}
where \(\eta \in \mathrm C^2(\mathbb{R})\), \(q'(\xi) = (\xi V(\xi))' \eta'(\xi)\), and
\[
P(\xi) \coloneqq \xi\,\eta'(\xi) - \eta(\xi), 
\qquad Q'(\xi) \coloneqq P(\xi) V'(\xi).
\]

In particular, integrating over \(\R_+ \times \mathbb{R}\), we deduce
\begin{align}\label{eq:diss-exp}
0 \le -\int_0^T\int_{\mathbb{R}}  \frac{1}{\varepsilon} V'(w_\varepsilon)\big(P(w_\varepsilon) - P(u_\varepsilon)\big)(w_\varepsilon - u_\varepsilon) \, \mathrm dx \, \mathrm d t
\le C \int_{\R} \eta(u_0) \, \mathrm d x.
\end{align}

For the quadratic entropy \(\eta(\xi) \coloneqq \frac{1}{2} \xi^2\), we have
 \begin{align}\label{eq:entropy-balance-exp1}
 \begin{aligned}
 \partial_t \left( \frac{u_\varepsilon^2}{2} \right) + \partial_x q(u_\varepsilon) 
 &= \partial_x \left( \frac{u_\varepsilon^2}{2} (V(u_\varepsilon) - V(w_\varepsilon)) + Q(u_\varepsilon) - Q(w_\varepsilon) \right)
 \\ & \qquad + \frac{1}{2\varepsilon} V'(w_\varepsilon) (w_\varepsilon - u_\varepsilon)^2 (w_\varepsilon + u_\varepsilon).
 \end{aligned}
 \end{align}
and, in particular, 
\begin{align}\label{eq:diss-quadratic-exp}
0 \le -\int_0^T \int_{\mathbb{R}}  \frac{1}{2\varepsilon} V'(w_\varepsilon) (w_\varepsilon - u_\varepsilon)^2 (w_\varepsilon + u_\varepsilon) \, \mathrm dx \, \mathrm d t 
\le \int_{\R} u_0^2 \, \mathrm d x.
\end{align}

\end{proposition}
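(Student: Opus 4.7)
The plan is to derive the identity by multiplying the transport equation $\pt u_\eps + \px(V(w_\eps)u_\eps) = 0$ by $\eta'(u_\eps)$ and then exploiting the exponential-kernel identity $\eps\,\px w_\eps = w_\eps - u_\eps$ recalled in \cref{eq:exp-identity} to convert every spatial derivative of $w_\eps$ into the algebraic relaxation term $(w_\eps-u_\eps)/\eps$. The whole derivation is, in spirit, the point-relaxation analogue of the distributed-relaxation balance \cref{eq:entropy-balance} from \cref{prop:energy-g}, with the nonlocal quantity $H_\eta$ replaced by the pointwise defect $P(w_\eps)-P(u_\eps)$.

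Concretely, after applying the chain rule (justified, as usual, by a parabolic or mollification approximation; see \cite[Theorem 3.1]{MR4651679}, \cite[Lemma 3.1]{MR4855163}) and rewriting $V(w_\eps)\eta'(u_\eps)\px u_\eps = \px(V(w_\eps)\eta(u_\eps)) - \eta(u_\eps)V'(w_\eps)\px w_\eps$, the combination $u_\eps\eta'(u_\eps) - \eta(u_\eps) = P(u_\eps)$ emerges naturally, leading to
\begin{align*}
\pt\eta(u_\eps) + \px\bigl(V(w_\eps)\eta(u_\eps)\bigr) + V'(w_\eps)\, P(u_\eps)\,\px w_\eps = 0.
\end{align*}
Substituting $\px w_\eps = (w_\eps - u_\eps)/\eps$ gives a ``raw'' identity with the asymmetric defect $V'(w_\eps)P(u_\eps)(w_\eps-u_\eps)/\eps$. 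A short computation shows that $(q - V\eta)'(\xi) = V'(\xi)P(\xi) = Q'(\xi)$, so that $V(w_\eps)\eta(u_\eps) = q(u_\eps) - Q(u_\eps) - \eta(u_\eps)(V(u_\eps)-V(w_\eps))$ up to an irrelevant additive constant; this already produces the entropy flux $q(u_\eps)$ together with a first divergence contribution.

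The crucial symmetrization step is to notice that $\px Q(w_\eps) = Q'(w_\eps)\px w_\eps = V'(w_\eps)P(w_\eps)(w_\eps-u_\eps)/\eps$. Adding and subtracting $Q(w_\eps)$ inside the spatial divergence therefore converts the asymmetric defect $-V'(w_\eps)P(u_\eps)(w_\eps-u_\eps)/\eps$ into the symmetric, factored expression $V'(w_\eps)(P(w_\eps)-P(u_\eps))(w_\eps-u_\eps)/\eps$, which is exactly \cref{eq:entropy-exp1}. The quadratic specialization \cref{eq:entropy-balance-exp1} is then just the substitution $P(\xi) = \xi^2/2$, giving $(P(w)-P(u))(w-u) = \tfrac12(w+u)(w-u)^2$.

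For the dissipation bound \cref{eq:diss-exp}, I would integrate \cref{eq:entropy-exp1} over $[0,T]\times\R$; the divergence contribution vanishes thanks to the $\mathrm L^1\cap \mathrm L^\infty$ bounds recalled in \cref{eq:linfty}. The right-hand side has a definite sign: $V'\le 0$ from \cref{ass:V}, while $P'(\xi) = \xi\eta''(\xi)\ge 0$ on $[0,1]$ for convex $\eta$, so $(P(w_\eps)-P(u_\eps))(w_\eps-u_\eps)\ge 0$. Hence the time-integrated identity reads $\int_\R \eta(u_\eps(T))\,\mathrm dx - \int_\R\eta(u_0)\,\mathrm dx \le 0$, which, using $\eta\ge 0$ on $[0,1]$, yields the inequalities \cref{eq:diss-exp} and \cref{eq:diss-quadratic-exp}. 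The only genuine technical obstacle is the chain rule for $\eta\circ u_\eps$, since $u_\eps$ is only $\mathrm L^\infty\cap \mathrm C^0(\R_+; \mathrm L^1_{\mathrm{loc}})$ and may carry discontinuities; this is handled exactly as in \cref{prop:energy-g} by a vanishing-viscosity regularization of the Cauchy problem and passage to the limit via the stability of the nonlocal equation.
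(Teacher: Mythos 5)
Your proposal is correct and follows essentially the same route as the paper's proof: multiply by $\eta'(u_\varepsilon)$, let $P(\xi)=\xi\eta'(\xi)-\eta(\xi)$ emerge, introduce $Q$ with $Q'=PV'$, use $\varepsilon\,\partial_x w_\varepsilon=w_\varepsilon-u_\varepsilon$, and symmetrize by adding and subtracting $Q(w_\varepsilon)$ so that the defect factors as $V'(w_\varepsilon)(P(w_\varepsilon)-P(u_\varepsilon))(w_\varepsilon-u_\varepsilon)/\varepsilon$. The only difference is cosmetic: the paper first rewrites the equation with the local flux $\partial_x(V(u_\varepsilon)u_\varepsilon)$ on the left and splits $P(u_\varepsilon)=(P(u_\varepsilon)-P(w_\varepsilon))+P(w_\varepsilon)$, whereas you keep the flux $V(w_\varepsilon)\eta(u_\varepsilon)$ and use the relation $q-V\eta=Q$ (up to a constant) at the end; the sign discussion for \cref{eq:diss-exp} (convexity of $\eta$, monotonicity of $P$ on $[0,1]$) is likewise the intended one.
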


\begin{proof}
We write the conservation law \cref{eq:cl} in the form:
\begin{align*}
\partial_t u_\varepsilon(t,x) + \partial_x (V(u_\varepsilon(t,x)) u_\varepsilon(t,x)) 
= \partial_x\Big(u_\varepsilon(t,x) \big(V(u_\varepsilon(t,x)) - V(w_\varepsilon(t,x))\big)\Big).
\end{align*}

Multiplying the equation by \(\eta'(u_\varepsilon)\) and applying the chain rule (justified, for instance, by an approximation argument, as in \cite[Theorem 3.1]{MR4651679} or \cite[Lemma 3.1]{MR4855163}) yields:
\begin{align*}
&\partial_t \eta(u_\varepsilon) + \partial_x q(u_\varepsilon)\\
&= \eta'(u_\varepsilon) \partial_x \big( u_\varepsilon (V(u_\varepsilon) - V(w_\varepsilon)) \big) \\
&= \eta'(u_\varepsilon) \partial_x u_\varepsilon (V(u_\varepsilon) - V(w_\varepsilon)) + \eta'(u_\varepsilon) u_\varepsilon \partial_x (V(u_\varepsilon) - V(w_\varepsilon)) \\
&= \partial_x \eta(u_\varepsilon) (V(u_\varepsilon) - V(w_\varepsilon)) + \eta'(u_\varepsilon) u_\varepsilon \partial_x (V(u_\varepsilon) - V(w_\varepsilon)) \\
&= \partial_x \big( \eta(u_\varepsilon)(V(u_\varepsilon) - V(w_\varepsilon)) \big) + \big( \eta'(u_\varepsilon) u_\varepsilon - \eta(u_\varepsilon) \big) \partial_x (V(u_\varepsilon) - V(w_\varepsilon)) \\
\intertext{introducing \( P(\xi) \coloneqq \eta'(\xi)\xi - \eta(\xi) \):}
&= \partial_x \big( \eta(u_\varepsilon)(V(u_\varepsilon) - V(w_\varepsilon)) \big) 
+ \big( P(u_\varepsilon) - P(w_\varepsilon) \big) \partial_x (V(u_\varepsilon) - V(w_\varepsilon)) \\
&\quad + P(w_\varepsilon) \partial_x (V(u_\varepsilon) - V(w_\varepsilon)) \\
\intertext{introducing \( Q'(\xi) \coloneqq P(\xi) V'(\xi) \), so that \( \partial_x Q(\xi) = Q'(\xi)\partial_x \xi \):}
&= \partial_x \big( \eta(u_\varepsilon)(V(u_\varepsilon) - V(w_\varepsilon)) \big) 
+ \partial_x \big(Q(u_\varepsilon) - Q(w_\varepsilon) \big)
+ V'(w_\varepsilon) \big( P(w_\varepsilon) - P(u_\varepsilon) \big) \partial_x w_\varepsilon \\
\intertext{(using \cref{eq:exp-identity})}
&= \partial_x \big( \eta(u_\varepsilon)(V(u_\varepsilon) - V(w_\varepsilon)) + Q(u_\varepsilon) - Q(w_\varepsilon) \big)
+ \frac{1}{\varepsilon} V'(w_\varepsilon) \big( P(w_\varepsilon) - P(u_\varepsilon) \big)(w_\varepsilon - u_\varepsilon),
\end{align*}
which completes the proof.
\end{proof}

\begin{comment}
{\color{blue}

3. If the final claim in 2. is correct, 

4. If the final claim in 2. is correct, we can also use the following strategy for the constant kernel: we first show $\|\mathcal{I}_{1,\eps}\|_{\mathrm{W}^{-1,2}}$ is uniformly bounded and use compensated compactness to justify $u_\eps\to u$ in $L^p_{\mathrm{loc}}$, and the entropy admissibility of $u$ can be shown via the limit $w_\eps\to u$.
}
\end{comment}

\section{Proof of \texorpdfstring{\cref{it:constant}}{constant kernel case}}
\label{sec:constant}

Starting from \cref{rk:entropy-constant-gamma}, we can prove \cref{it:constant}. We will need the following \textit{Hardy--Littlewood inequality} \cite[Lemma 1.6]{burchard2009short} (which has also been used in \cite{huang2022stability,bressan2020traffic} for establishing stability estimates or entropy inequalities for \cref{eq:cl}).

\begin{lemma}[Hardy--Littlewood's inequality]
\label{lm:hardy}
Suppose that $f$ and $g$ are nonnegative measurable functions on $\R$, vanishing at infinity. Then 
\[ \int_\R f(x)g(x) \,\d x \leq \int_\R f^\ast(x)g^\ast(x) \,\d x, \]
where $f^\ast$ and $g^\ast$ are the symmetric decreasing rearrangements of $f$ and $g$, respectively.\footnote{~We say $f^\ast$ is the \emph{symmetric decreasing rearrangement} of $f$, if $f(-x)=f(x)$ for all $x\in\R$, $f^\ast$ decreasing on $[0,\infty[$, and $\mathrm{Vol}(\{x:\ f(x)>t\})=\mathrm{Vol}(\{x:\ f^\ast(x)>t\})$ for all $t\geq0$.} In particular, if $f\coloneqq F(h)$ and $g\coloneqq h(\cdot+a)$, where $F:\R\to\R$ is strictly increasing and $a\in\R$, the above inequality yields
\[ \int_{\R} F(h(x)) (h(x)-h(x+a)) \,\d x \geq 0. \]
\end{lemma}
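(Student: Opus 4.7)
The plan is to prove the first (classical) inequality via the \emph{layer-cake} representation and then derive the \enquote{in particular} consequence by two easy properties of the symmetric decreasing rearrangement: translation invariance and compatibility with strictly monotone functions.

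For the main inequality, I would first write both integrands through their super-level sets: since $f, g \ge 0$ vanish at infinity, one has $f(x) = \int_0^\infty \mathds{1}_{\{f > t\}}(x) \, \mathrm d t$ and similarly for $g$, with each super-level set having finite Lebesgue measure. Fubini then gives
\[
\int_\R f(x) g(x) \,\mathrm d x = \int_0^\infty \!\! \int_0^\infty \bigl| \{ f > t \} \cap \{ g > s \} \bigr| \, \mathrm d s \, \mathrm d t.
\]
The key geometric observation is the \enquote{bathtub} estimate: for any measurable $A, B \subset \R$ of finite measure, $|A \cap B| \le \min(|A|, |B|)$, with equality attained by the centered intervals $A^\ast = [-|A|/2, |A|/2]$ and $B^\ast = [-|B|/2, |B|/2]$ (since one is contained in the other). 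Because $\{ f^\ast > t \} = \{ f > t \}^\ast$ by the very definition of symmetric decreasing rearrangement, substituting this pointwise (in $(s,t)$) inequality back and undoing the layer cake yields the desired bound.

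For the consequence, I would note two facts about the rearrangement. First, translation invariance: $(h(\cdot + a))^\ast = h^\ast$, because translation preserves every super-level set up to a shift. Second, if $F:\R\to\R$ is strictly increasing (and $F(h)$ is nonnegative and vanishes at infinity, as is implicit in the statement), then the super-level sets of $F(h)$ coincide with super-level sets of $h$ (via $\{F(h) > t\} = \{h > F^{-1}(t)\}$), so $(F(h))^\ast = F(h^\ast)$. Combined with the main inequality applied to $f = F(h)$, $g = h(\cdot+a)$,
\[
\int_\R F(h(x)) \, h(x+a) \, \mathrm d x \;\le\; \int_\R F(h^\ast(x)) \, h^\ast(x) \, \mathrm d x \;=\; \int_\R F(h(x)) \, h(x) \, \mathrm d x,
\]
where the last equality is equimeasurability of $h$ and $h^\ast$ applied to the integrand $\xi \mapsto F(\xi)\xi$. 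Rearranging gives the stated nonnegativity.

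The main technical point—and the only real obstacle—is to justify the manipulations under the mild hypotheses \enquote{nonnegative, vanishing at infinity}: one must check that all super-level sets have finite measure (so that Fubini applies and $A^\ast, B^\ast$ are well-defined bounded intervals), and that $F(h)$ remains in the class to which the first part applies. If $F$ is only assumed strictly increasing (not necessarily continuous, and without a prescribed sign at $0$), a short reduction is needed: replace $F$ by $F - \inf_{[0,\|h\|_\infty]} F$ or work on the sub-level set where the integrand is nonzero, using that $h \in \mathrm L^1 \cap \mathrm L^\infty$ in our application. These are routine measure-theoretic adjustments; no new idea is required beyond the layer-cake/bathtub argument above.
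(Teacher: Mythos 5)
Your proof is correct: the layer-cake/nested-intervals argument for the main inequality is the standard one (it is exactly the proof in the reference \cite[Lemma 1.6]{burchard2009short} that the paper cites in lieu of a proof), and your derivation of the ``in particular'' consequence via translation invariance of the rearrangement, $(F(h))^\ast = F(h^\ast)$, and equimeasurability is the intended route. The measure-theoretic caveats you flag (finiteness of super-level sets, normalizing $F$ so that $F(h)$ vanishes at infinity, using $h\in\mathrm L^1\cap\mathrm L^\infty$ to split the integral) are indeed the only points needing care, and they are handled correctly.
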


\begin{proof}[Proof of \cref{it:constant}] \, 

\medskip\noindent\uline{Step 1.} \emph{Decomposition of the entropy production.} We decompose the entropy production computed in \cref{eq:entropy-prod-const} as follows: 
\begin{align*}
\mathcal{I}_{1,\varepsilon}
&\coloneqq \partial_x \left[ \eta'(w_\varepsilon(t,x)) 
\cdot \int_{-1}^0 \left( V(w_\varepsilon(t,x)) - V(w_\varepsilon(t,x - \varepsilon s)) \right)
 u_\varepsilon(t,x - \varepsilon s)\, \mathrm{d}s \right], \\
\mathcal{I}_{2,\varepsilon} &\coloneqq \partial_x \left[ \int_{-1}^0 
 u_\varepsilon(t,x - \varepsilon s)\, H_\eta(w_\varepsilon(t,x) \mid w_\varepsilon(t,x - \varepsilon s))\, \mathrm{d}s \right],\\
\mathcal{I}_{3,\varepsilon}
&\coloneqq \frac{1}{\varepsilon}  u_\varepsilon(t,x + \varepsilon)\, H_\eta\left( w_\varepsilon(t,x) \mid w_\varepsilon(t,x + \varepsilon) \right).
\end{align*}

Since $0\le u_\varepsilon,\,w_\varepsilon\le1$, the family $\{\mathcal{I}_{1,\varepsilon}+\mathcal{I}_{2,\varepsilon}-\mathcal{I}_{3,\varepsilon}\}_{\varepsilon>0}$ is bounded in $\mathrm W^{-1,\infty}_{\mathrm{loc}}(\R_+\times \R)$. By \cref{lm:Murat}, to conclude \cref{eq:goal-lemma} it suffices to show
\begin{align*}
&\{\mathcal{I}_{1,\varepsilon} \}_{\varepsilon>0},\; \{\mathcal{I}_{2,\varepsilon} \}_{\varepsilon>0}\;\text{are relatively compact in }\mathrm W^{-1,2}_{\mathrm{loc}}(\R_+\times \R),\\
&
\{\mathcal{I}_{3,\varepsilon}\}_{\varepsilon>0}\;\text{is bounded in }\mathrm L^1_{\mathrm{loc}}(\R_+\times \R).
\end{align*}

\medskip\noindent\uline{Step 2.} \emph{$L^1$–bound for $\{\mathcal{I}_{3,\varepsilon}\}_{\varepsilon>0}$.} 
We write 
 \[
 I_\eta(b) - I_\eta(a) = \int_a^b I_\eta'(z)\, \mathrm{d}z = \int_a^b \eta''(z)\, V(z)\, \mathrm{d}z,
 \]
 and
 \[
 \eta'(b) - \eta'(a) = \int_a^b \eta''(z)\, \mathrm{d}z.
 \]
Plugging this into the definition of \( H_\eta \), we get 
 \begin{align*}
 H_\eta(a \mid b)
 &= \int_a^b \eta''(z)\, V(z)\, \mathrm{d}z - V(b) \int_a^b \eta''(z)\, \mathrm{d}z \\
 &= \int_a^b \eta''(z)\, (V(z) - V(b))\, \mathrm{d}z.
 \end{align*}
 Thus,
 \begin{align*}
& \int_0^T \int_{\mathbb R} \frac{1}{\varepsilon}  u_\varepsilon(t,x + \varepsilon)\, H_\eta\left( w_\varepsilon(t,x) \mid w_\varepsilon(t,x + \varepsilon) \right)\, \mathrm{d}x \, \mathrm d t
\\ &= \int_0^T\int_{\mathbb R} \frac{1}{\varepsilon}  u_\varepsilon(t,x + \varepsilon)\, 
 \int_{w_\varepsilon(t,x)}^{w_\varepsilon(t,x + \varepsilon)} (V(z) - V(w_\varepsilon(t,x + \varepsilon)))\, \eta''(z)\, \mathrm{d}z\, \mathrm{d}x \, \mathrm d t.
 \end{align*}
 Since \( \| \eta'' \|_{\mathrm L^\infty([0,1])} \leq C \), we use \cref{eq:diss} to deduce 
 \begin{align}\label{eq:diss-s2}
\int_0^T \int_{\mathbb R} \frac{1}{\varepsilon}  u_\varepsilon(t,x + \varepsilon)\, H_\eta\left( w_\varepsilon(t,x) \mid w_\varepsilon(t,x + \varepsilon) \right)\, \mathrm{d}x \, \mathrm d t
 \leq C \, C_0.
 \end{align}

\medskip\noindent\uline{Step 3.} \emph{Compactness of $\{\mathcal{I}_{2,\varepsilon}\}_{\varepsilon>0}$.} 
Owing to Jensen's inequality, we estimate 
\begin{align*}
\int_0^T &\int_{\mathbb R} \left[ \int_{-1}^0 
 u_\varepsilon(t,x - \varepsilon s)\, H_\eta\!\bigl(w_\varepsilon(t,x) \mid w_\varepsilon(t,x - \varepsilon s)\bigr)\, \mathrm{d}s \right]^2\, \mathrm{d}x \, \mathrm d t \\ 
&\le \int_0^T \int_{\mathbb R} \int_{-1}^0 u_\varepsilon(t,x - \varepsilon s)^2\, H_\eta\!\bigl(w_\varepsilon(t,x) \mid w_\varepsilon(t,x - \varepsilon s)\bigr)^{2}\, \mathrm{d}s\, \mathrm{d} x\, \mathrm d t \\
&\le C\int_0^T \int_{\mathbb R} \int_{-1}^0 u_\varepsilon(t,x - \varepsilon s)\,
\bigl| H_\eta\!\bigl(w_\varepsilon(t,x) \mid w_\varepsilon(t,x - \varepsilon s)\bigr) \bigr| \, \mathrm{d}s\, \mathrm{d} x \, \mathrm d t,
\intertext{(where we used the fact that $0 \le u_\varepsilon, \, w_\varepsilon \le 1$, $V \in \mathrm{Lip}([0,1])$, $\eta \in C^2([0,1])$)}
 & \le C \int_0^T \int_{\mathbb R} \int_{-1}^0 (-\varepsilon s) \frac{1}{(-\varepsilon s)}  u_\varepsilon(t,x - \varepsilon s)\,
\bigl| H_\eta\!\bigl(w_\varepsilon(t,x) \mid w_\varepsilon(t,x - \varepsilon s)\bigr) \bigr| \, \mathrm{d}s\, \mathrm{d} x \, \mathrm d t.   
\end{align*}

Fixing $\delta>0$, we apply \cref{lm:hardy} with $F(\xi)=\frac1\delta\xi^2-\max\{\xi,\,\delta\}+\delta$ (which is non-negative and strictly increasing), $h=u_\eps$, and $a=\eps$. Then we obtain
\begin{align*}
    \int_\R \left( \frac1\delta u_\eps(t,x)^2 - \max\{u_\eps(t,x),\,\delta\} + \delta \right) (u_\eps(t,x) - u_\eps(t,x+\eps)) \,\d x \geq 0.
\end{align*}
Using \cref{eq:ueps_L2} and $\int_\R u_\eps(t,x)\,\d x = \int_\R u_\eps(t,x+\eps)\,\d x$, we have
\begin{align*}
    \int_0^T \int_\R \max\{u_\eps(t,x),\,\delta\}\, (u_\eps(t,x) - u_\eps(t,x+\eps)) \,\d x \,\d t \leq C_0\frac\eps\delta.
\end{align*} 

Integrating in $\R$, recalling the algebraic identity \((a-b)^2 = 2a(a-b) - (a^2 - b^2)\), and using $\int_\R u_\eps(t,x)^2\,\d x = \int_\R u_\eps(t,x+\eps)^2\,\d x$, we obtain
\[
\int_\R (u_\eps(t,x) - u_\eps(t,x+\eps))^2\,\d x = 2\int_\R u_\eps(t,x)\,(u_\eps(t,x) - u_\eps(t,x+\eps))\,\d x.
\]
Then, writing 
\[
u_\eps (u_\eps - u_\eps(\cdot+\eps)) = \max\{u_\eps,\,\delta\}(u_\eps - u_\eps(\cdot+\eps)) - (\max\{u_\eps,\,\delta\} - u_\eps)(u_\eps - u_\eps(\cdot+\eps)),
\]
we deduce 
\begin{align*}
    \int_0^T \int_\R (u_\eps(t,x) - u_\eps(t,x+\eps))^2 \,\d x \,\d t &= 2 \int_0^T \int_\R u_\eps(t,x) (u_\eps(t,x) - u_\eps(t,x+\eps)) \,\d x \,\d t \\
    &\leq 2 \left ( C_0\frac\eps\delta + \delta \int_0^T 2\|u_\eps(t,\cdot)\|_{\mathrm{L}^1(\R)}\,\d t \right) \\
    &= 2 \left ( C_0\frac\eps\delta + 2\delta T\|u_0\|_{\mathrm{L}^1(\R)} \right).
\end{align*}
By taking $\delta=\sqrt{\eps}$, we deduce that
\[ \int_0^T \int_\R (u_\eps(t,x) - u_\eps(t,x+\eps))^2 \,\d x \,\d t \leq CC_0\sqrt{\eps}. \]
Then we substitute $\partial_xw_\eps = \frac1\eps(u_\eps(\cdot+\eps)-u_\eps)$ into the inequality to get
\begin{align}\label{eq:weps_L2}
    \eps^2 \int_0^T \int_\R (\partial_xw_\eps(t,x))^2 \,\d x \,\d t \leq CC_0\sqrt{\eps}.
\end{align}

Now we can proceed to estimate $\int_0^T \int_{\mathbb R} \int_{-1}^0 u_\varepsilon(t,x - \varepsilon s)\,
\bigl| H_\eta\!\bigl(w_\varepsilon(t,x) \mid w_\varepsilon(t,x - \varepsilon s)\bigr) \bigr| \, \mathrm{d}s\, \mathrm{d} x \, \mathrm d t$, which is controlled by
\begin{align*}
    \int_0^T &\int_\R \int_{-1}^0 u_\eps(t,x-\eps s) (w_\eps(t,x)-w_\eps(t,x-\eps s))^2 \,\d s\,\d x \,\d t \\
    \leq& \int_0^T \int_\R \int_{-1}^0 \left( \int_0^{-\eps s} \partial_xw_\eps(t,x+y)\,\d y\right)^2 \,\d s\,\d x \,\d t \\
    \intertext{using Jensen's inequality} 
    \leq& \int_0^T \int_\R \int_{-1}^0 (-\eps s)  \int_0^{-\eps s} (\partial_xw_\eps(t,x+y))^2\,\d y \,\d s\,\d x \,\d t \\
    =& \left( \int_{-1}^0 (-\eps s) \int_0^{-\eps s} \,\d y \,\d s \right) \int_0^T \int_\R (\partial_xw_\eps(t,x))^2 \,\d x \,\d t \\
    =& \frac13 \eps^2 \int_0^T \int_\R (\partial_xw_\eps(t,x))^2 \,\d x \,\d t \\
    \leq& CC_0\sqrt{\eps},
\end{align*}
converging to zero as $\eps\searrow 0$, ensuring the compactness of $\{\mathcal{I}_{2,\eps}\}_{\eps>0}$ in $\mathrm W^{-1,2}_{\mathrm{loc}}(\R_+\times \R)$.

\medskip\noindent\uline{Step 3.} \emph{Compactness of $\{\mathcal{I}_{1,\varepsilon}\}_{\varepsilon>0}$.}  Applying Jensen's inequality and using  the fact that $0 \le u_\varepsilon, \, w_\varepsilon \le 1$ and $\eta \in C^2([0,1])$), we deduce 
\begin{align*}
&\int_0^T \int_{\mathbb R}\left[ \eta'(w_\varepsilon(t,x)) 
\int_{-1}^0 \left( V(w_\varepsilon(t,x)) - V(w_\varepsilon(t,x - \varepsilon s)) \right)
 u_\varepsilon(t,x - \varepsilon s)\, \mathrm{d}s \right]^2\, \mathrm{d}x \, \mathrm d t \\
&\leq C\int_0^T  \int_{\mathbb R}\left[\int_{-1}^0|V(w_\varepsilon(t,x))-V(w_\varepsilon(t,x - \varepsilon s))|\,u_\varepsilon(t,x-\varepsilon s)\,\mathrm{d}s\right]^2\mathrm{d}x \, \mathrm d t.
\\ &\leq C \int_0^T  \int_{\mathbb R}\int_{-1}^0 u_\varepsilon(t,x-\varepsilon s)^2\,(V(w_\varepsilon(t,x))-V(w_\varepsilon(t,x - \varepsilon s)))^2\,\mathrm{d}s\,\mathrm{d}x \, \mathrm d t.
\end{align*}
Making the change of variables \(y=x-\varepsilon s\), hence \(x=y+\varepsilon s\), we rewrite the right-hand side above as 
\begin{align}\label{eq:pause1}
C \int_0^T\int_{-1}^0\int_{\mathbb R} u_{\varepsilon}(t,y)^2\,(V(w_\varepsilon(t,y+\varepsilon s))-V(w_\varepsilon(t,y)))^2\,\mathrm{d}y\,\mathrm{d}s \, \mathrm d t.
\end{align}

We now claim that the following inequality holds:
\begin{align}\label{eq:ineqV}
\big(V(w_\varepsilon(t,y+\varepsilon s)) - V(w_\varepsilon(t,y))\big)^2
\leq 2\,\|V'\|_{\mathrm L^\infty([0,1])}
\int_{w_\varepsilon(t,y)}^{w_\varepsilon(t,y+\varepsilon s)}
\big(V(z) - V(w_\varepsilon(t,y+\varepsilon s))\big)\,\mathrm{d}z.
\end{align}
Without loss of generality, assume \(w_\varepsilon(t,y) \leq w_\varepsilon(t,y+\varepsilon s)\). By the fundamental theorem of calculus and Fubini's theorem, the right-hand side of \cref{eq:ineqV} can be rewritten as
\begin{align*}
\int_{w_\varepsilon(t,y)}^{w_\varepsilon(t,y+\varepsilon s)} 
\big(V(z) - V(w_\varepsilon(t,y+\varepsilon s))\big)\,\mathrm{d}z
&= \int_{w_\varepsilon(t,y)}^{w_\varepsilon(t,y+\varepsilon s)} 
\int_{z}^{w_\varepsilon(t,y+\varepsilon s)} -V'(r)\,\mathrm{d}r\,\mathrm{d}z \\
&= \int_{w_\varepsilon(t,y)}^{w_\varepsilon(t,y+\varepsilon s)} 
\big(r - w_\varepsilon(t,y)\big)\,(-V'(r))\,\mathrm{d}r \\
&= \int_0^{w_\varepsilon(t,y+\varepsilon s) - w_\varepsilon(t,y)} 
z\,(-V'(w_\varepsilon(t,y) + z))\,\mathrm{d}z.
\end{align*}
By the \textit{bathtub principle}\footnote{~Let us recall the bathtub principle, which is contained in the following lemma: Let \(L>0\), \(M>0\), and \(\psi:[0,L]\to\mathbb R\) be a non-decreasing function. Among all measurable \(h:[0,L]\to[0,M]\) with fixed mass \(\int_0^L h\,\mathrm{d}t=m\in[0,ML]\), the quantity \(\int_0^L \psi(z)\,h(z)\,\mathrm{d}z\) is minimized by the bang-bang profile \(h^\star(z)=M\,\mathds 1_{[0,\,m/M]}(z)\) and maximized by \(h_\star(z)=M\,\mathds 1_{[L-m/M,\,L]}(z)\). 

In our proof, we apply this lemma with \(\psi(z)=z\), \(L=b-a\), \(a=w_\varepsilon(t,y)\), \(b=w_\varepsilon(t,y+\varepsilon s)\),  and \(M=\|V'\|_{\mathrm L^\infty([0,1])}\) in order to obtain a lower-bound for the integral involving \(h(z)=-V'(w_\varepsilon(t,y)+z)\).}
(see \cite[Theorem 1.14, p.~28]{MR1817225}),
\[
\int_0^{w_\varepsilon(t,y+\varepsilon s) - w_\varepsilon(t,y)} 
z\,(-V'(w_\varepsilon(t,y) + z))\,\mathrm{d}z
\;\ge\;
\frac{\big(V(w_\varepsilon(t,y)) - V(w_\varepsilon(t,y+\varepsilon s))\big)^2}
{2\,\|V'\|_{\mathrm L^\infty([0,1])}}.
\]
Multiplying by \(2\,\|V'\|_{\mathrm L^\infty([0,1])}\) yields \cref{eq:ineqV}.

Returning to \cref{eq:pause1}, we now estimate (using \(0 \leq u_\varepsilon \leq 1\) and \cref{eq:ineqV})
\begin{align*}
&C\int_0^T \int_{-1}^0\int_{\mathbb R} u_\varepsilon(t,y)(V(w_\varepsilon(t,y+\varepsilon s))-V(w_\varepsilon(t,y)))^2\,\mathrm{d}y\,\mathrm{d}s\, \mathrm d t \\
&\leq2\,\|V'\|_{\mathrm L^\infty([0,1])}\int_0^T \int_{-1}^0\int_{\mathbb R}\int_{w_\varepsilon(t,y)}^{w_\varepsilon(t,y+\varepsilon s)}(V(z)-V(w_\varepsilon(t,y+\varepsilon s)))\,\mathrm{d}z\,\mathrm{d}y\,\mathrm{d}s \, \mathrm d t.
\end{align*}
We can now conclude similarly as in Step 3.

\medskip\noindent\uline{Step 5.} \emph{Conclusion.}
From Steps 2--4, $\{\mathcal{I}_{1,\varepsilon}\}_{\varepsilon >0}$ and $\{\mathcal{I}_{2,\varepsilon}\}_{\varepsilon >0}$ are relatively compact in $\mathrm W^{-1,2}_{\mathrm{loc}}(\R_+\times \R)$ and $\{\mathcal{I}_{3,\varepsilon}\}_{\varepsilon >0}$ is bounded in $\mathrm{L}^1_{\mathrm{loc}}(\R_+\times \R)$. By \cref{lm:Murat}, \cref{eq:goal-lemma} follows, completing the proof.

\end{proof}

\section{Proof of \texorpdfstring{\cref{it:derivative}}{strictly increasing kernel case}}
\label{sec:super}

We address the case of the super-exponential kernel.

\begin{proof}[Proof of \cref{it:derivative}] \, 

\medskip\noindent\uline{Step 1.} \emph{Decomposition of the entropy production.} We split the entropy production computed in \cref{eq:entropy-balance} as follows: 
\begin{align*}
\mathcal{I}_{1,\varepsilon}
&\coloneqq \px \big(\eta'(w_\eps)   \big(V(w_\eps)w_\eps - (V(w_\eps)  u_\eps) \ast \gamma_\eps\big)\big), \\ 
\mathcal{I}_{2,\varepsilon} &\coloneqq
     \partial_x \big( \big(  u_\eps(t,\cdot) H_\eta(w_\eps(t,x)\mid w_\eps(t,\cdot)) \big) \ast\gamma_\eps \big),\\
\mathcal{I}_{3,\varepsilon}
&\coloneqq - \big(  u_\eps(t,\cdot) H_\eta(w_\eps(t,x)\mid w_\eps(t,\cdot)) \big) \ast\gamma'_\eps.
\end{align*}

Since $0\le u_\varepsilon,\,w_\varepsilon\le1$, the family $\{\mathcal{I}_{1,\varepsilon}+\mathcal{I}_{2,\varepsilon}+\mathcal{I}_{3,\varepsilon}\}_{\varepsilon>0}$ is bounded in $\mathrm W^{-1,\infty}_{\mathrm{loc}}(\R_+\times \R)$. By \cref{lm:Murat}, to conclude \cref{eq:goal-lemma} it suffices to show
\begin{align*}
&\{\mathcal{I}_{1,\varepsilon} \}_{\varepsilon>0},\; \{\mathcal{I}_{2,\varepsilon} \}_{\varepsilon>0}\;\text{are relatively compact in }\mathrm W^{-1,2}_{\mathrm{loc}}(\R_+\times \R),\\
&
\{\mathcal{I}_{3,\varepsilon}\}_{\varepsilon>0}\;\text{is bounded in }\mathrm L^1_{\mathrm{loc}}(\R_+\times \R).
\end{align*}

\medskip\noindent\uline{Step 2.} \emph{$L^1$-bound for $\{\mathcal{I}_{3,\varepsilon}\}_{\varepsilon>0}$.}
Since, by the fundamental theorem of calculus, 
\begin{align*}
I_\eta(b) - I_\eta(a) &= \int_a^b I_\eta'(z)\, \mathrm{d}z = \int_a^b \eta''(z)\, V(z)\, \mathrm{d}z,\\
\eta'(b) - \eta'(a) &= \int_a^b \eta''(z)\, \mathrm{d}z.
\end{align*}
we deduce 
\begin{align*}
H_\eta(a \mid b) & \coloneqq I_\eta(b) - I_\eta(a) - V(b)\left( \eta'(b) - \eta'(a) \right) \\ &= \int_a^b \eta''(z)\, V(z)\, \mathrm{d}z - V(b) \int_a^b \eta''(z)\, \mathrm{d}z \\
&= \int_a^b \left( V(z) - V(b) \right)\, \eta''(z)\, \mathrm{d}z.
\end{align*}

Therefore, since $0 \le u_\varepsilon,\, w_\varepsilon \le 1$ and $\eta \in \mathrm C^2([0,1])$,
\[
\int_0^T \int_{\mathbb R}
\left(  u_\varepsilon(t,\cdot)\, H_\eta(w_\varepsilon(t,x) \mid w_\varepsilon(t,\cdot)) \right) \ast \gamma'_\varepsilon(x)\, \mathrm{d}x \, \mathrm d t
\leq C\, C_0,
\]
with $C_0$ as in \cref{eq:diss}.

\medskip\noindent\uline{Step 3.} \emph{Compactness of $\{\mathcal{I}_{1,\varepsilon}\}_{\varepsilon>0}$.}   Applying Jensen's inequality and the fact that $0 \le u_\varepsilon, \, w_\varepsilon \le 1$,   $V \in \mathrm{Lip}([0,1])$, and $\eta \in C^2([0,1])$, we estimate
\begin{align*}
&\int_0^T\int_{\mathbb R} \left[ \eta^{\prime}\left(w_{\varepsilon}(t,x)\right)\, \frac{1}{\varepsilon} \int_{\mathbb{R}} u_{\varepsilon}(t,y)\left(V \left(w_{\varepsilon}(t,x)\right) - V\left(w_{\varepsilon}(t,y)\right)\right)\, \gamma\left(\frac{x-y}{\varepsilon}\right)\, \mathrm{d}y \right]^2 \mathrm{d}x \, \mathrm d t \\
&\leq C \int_0^T \int_{\mathbb R} \left[ \frac{1}{\varepsilon} \int_{\mathbb{R}} u_{\varepsilon}(t,y)\left(V \left(w_{\varepsilon}(t,x)\right) - V\left(w_{\varepsilon}(t,y)\right)\right)\, \gamma\left(\frac{x-y}{\varepsilon}\right)\, \mathrm{d}y \right]^2 \mathrm{d}x \, \mathrm d t
\\ &\leq C\int_0^T  \int_{\mathbb R} \frac{1}{\varepsilon} \int_{\mathbb{R}} \left[ u_{\varepsilon}(t,y)\left(V \left(w_{\varepsilon}(t,x)\right) - V\left(w_{\varepsilon}(t,y)\right)\right) \right]^2\, \gamma\left( \frac{x - y}{\varepsilon} \right)\, \mathrm{d}y\, \mathrm{d}x \, \mathrm d t.
\end{align*}

Let us define
\begin{align*}
    g_\eps(t,s)\coloneqq \int_0^T \int_\R u_\eps(t,x-\eps s) \int_{w_\eps(t,x)}^{w_\eps(t,x-\eps s)} \big( V(z)-V(w_\eps(t,x-\eps s))\big) \,\d z\,\d x \d t,
\end{align*}

We aim to show 
\begin{align}\label{eq:aim-g}
\int_{-\infty}^0 g_\eps(t,s)\gamma(s)\,\d s \to0 \quad \text{as } \eps\to0. 
\end{align}

First, from \cref{eq:diss}, we deduce
\[\int_{-\infty}^0 g_\eps(t,s)\gamma'(s)\,\d s \leq C_0\,\eps\quad \text{for all }\eps>0, \]
and notice that, for a fixed $T>0$, $g$ satisfies
\[ 0 \leq g_\eps(t,s) \leq T\|V\|_{\mathrm{L}^\infty} \|u_\eps\|_{\mathrm{L}^1} =  T\|V\|_{\mathrm{L}^\infty} \|u_0\|_{\mathrm{L}^1} \eqqcolon C_1 \quad \text{for all }\eps>0. \]

To prove \cref{eq:aim-g}, we consider (recalling \cref{ass:gamma-strict})
\[ A_{\delta,M} \coloneqq \{s: \ -M\leq s \leq 0 \ \text{and} \ \gamma'(s)\geq\delta \} \quad \text{for any }\delta,M>0. \]
Then, we have
\begin{align*}
    \int_{A_{\delta,M}} g_\eps(t,s)\gamma(s)\,\d s \leq \frac{\gamma(0)}{\delta} \int_{A_{\delta,M}} g_\eps(t,s)\gamma'(s)\,\d s \leq C_0 \frac{\gamma(0)}{\delta} \eps,
\end{align*}
and
\begin{align*}
    \int_{]-\infty,0]\backslash A_{\delta,M}} g_\eps(t,s)\gamma(s)\,\d s &\leq C_1 \int_{-\infty}^0 \mathds{1}_{\{s\leq-M \text{ or } \gamma'(s)<\delta\}}(s) \gamma(s)\,\d s \\
    &\eqqcolon C_1 K_\gamma(\delta,M) \to 0 \quad \text{as } \delta\to0,\,M\to\infty,
\end{align*}
where the last line is due to Lebesgue's dominated convergence theorem.
Thus, from
\begin{align*}
    \int_{-\infty}^0 g_\eps(t,s)\gamma(s)\,\d s \leq C_0 \frac{\gamma(0)}{\delta} \eps + C_1 K_\gamma(\delta,M),
\end{align*}
by passing to the limit as $\delta,\,\eps\searrow 0$, with $\varepsilon = o(\delta)$ (i.\,e., while $\varepsilon/\delta \to 0$), and $M\to+\infty$, we deduce that \cref{eq:aim-g} holds; this, in turn, yields that $\{\mathcal I_{1,\varepsilon}\}_{\varepsilon >0}$ is compact in $\mathrm W^{-1,2}_{\mathrm{loc}}(\R_+\times \R)$.

\medskip\noindent\uline{Step 4.} \emph{Compactness of $\{\mathcal{I}_{2,\varepsilon}\}_{\varepsilon>0}$.}  
Applying Jensen's inequality and using the fact that $0 \le u_\varepsilon, \, w_\varepsilon \le 1$ and $V, \eta \in \mathrm C^2([0,1])$, we estimate 
\begin{align*}
&\int_0^T \int_{\mathbb R}
\left[ \left(  u_\varepsilon(t,\cdot)\, H_\eta(w_\varepsilon(t,x) \mid w_\varepsilon(t,\cdot)) \right) \ast \gamma_\varepsilon \right]^2 \mathrm{d}x \, \mathrm d t\\ 
&\leq C \int_0^T \int_{\mathbb R} \left(  u_\varepsilon(t,\cdot)\, \int_{w_\varepsilon(t,x)}^{w_\varepsilon(t,\cdot)} (V(z) - V(w_\varepsilon(t,\cdot)))\, \mathrm{d}z \right)^2 \ast \gamma_\varepsilon(x)\, \mathrm{d}x  \, \mathrm d t \\
&\le  C \int_0^T \int_{\mathbb R} \left(  u_\varepsilon(t,\cdot)\, \int_{w_\varepsilon(t,x)}^{w_\varepsilon(t,\cdot)} (V(z) - V(w_\varepsilon(t,\cdot)))^2\, \mathrm{d}z \right) \ast \gamma_\varepsilon(x)\, \mathrm{d}x  \, \mathrm d t
\end{align*}
We can then argue as in Step 3 to conclude the proof.

\medskip\noindent\uline{Step 5.} \emph{Conclusion.}
From Steps 2--4, $\{\mathcal{I}_{1,\varepsilon}\}_{\varepsilon >0}$ and $\{\mathcal{I}_{2,\varepsilon}\}_{\varepsilon >0}$ are relatively compact in $W^{-1,2}_{\mathrm{loc}}(\R_+\times \R)$ and $\{\mathcal{I}_{3,\varepsilon}\}_{\varepsilon >0}$ is bounded in $\mathrm{L}^1_{\mathrm{loc}}(\R_+\times \R)$. By \cref{lm:Murat}, \cref{eq:goal-lemma} follows, completing the proof.

\end{proof}

\begin{remark}[Special cases]
    For specific $\gamma$, finer estimates can be obtained in Steps 3--4 from asymptotics of $K_\gamma$. For example:
    \begin{itemize}
        \item For the linear kernel $\gamma(z)\coloneqq 2(1+z)\mathds{1}_{]-\infty,0]}(z)$, we have $K_\gamma(\delta,M)=0$ for $\delta\leq2$ and $M\geq1$, so we can choose $\delta=2,M=1$ and obtain $\int_{-\infty}^0 g_\eps(t,s)\gamma(s)\,\d s\leq \frac12C_0\gamma(0)\eps$. 
        \item For the exponential kernel $\gamma(z)\coloneqq e^z\mathds{1}_{]-\infty,0]}(z)$, by taking $\delta=1$ and $M=\ln\eps$, we obtain $K_\gamma(\delta,M)=\eps$ and thus $\int_{-\infty}^0 g_\eps(t,s)\gamma(s)\,\d s\leq (C_0\gamma(0)+C_1)\eps$.
        \item More directly, if we assume that there exists $D>0$ such that
 \begin{align}\label{ass:gamma-derivative}
    \gamma'(z) \ge D\,\gamma(z) > 0, \qquad \text{for } z <0,
    \end{align}
    (including, in particular,  $\gamma(z)\coloneqq e^z\mathds{1}_{]-\infty,0]}(z)$, which satisfies \cref{ass:gamma-derivative} with equality and $D =1$), then $\int_{-\infty}^0 g_\eps(t,s)\gamma(s)\,\d s\leq D \int_{-\infty}^0 g_\eps(t,s)\gamma'(s)\,\d s \le D\, C_0\, \varepsilon.$
    \end{itemize}
\end{remark}

\section{Proof of \texorpdfstring{\cref{it:exp}}{exponential kernel case}}
\label{sec:exp}

Using \cref{prop:energy-exp}, we to prove the additional claim contained in \cref{it:exp}.

\begin{proof}[Proof of \cref{it:exp}] \,

\medskip\noindent\uline{Step 1.} \emph{Decomposition of the entropy production.} We split the entropy production computed in \cref{eq:entropy-exp1} as follows:
\begin{align*}
\mathcal{I}_{1,\varepsilon}
&\coloneqq \partial_x\Bigl[\eta(u_\varepsilon)\bigl(V(u_\varepsilon)-V(w_\varepsilon)\bigr)
+Q(u_\varepsilon)-Q(w_\varepsilon)\Bigr],\\
\mathcal{I}_{2,\varepsilon}
&\coloneqq \frac{1}{\varepsilon}V'(w_\varepsilon)\bigl(P(w_\varepsilon)-P(u_\varepsilon)\bigr)
(w_\varepsilon-u_\varepsilon).
\end{align*}
Since $0\le u_\varepsilon,\,w_\varepsilon\le1$, the family $\{\mathcal{I}_{1,\varepsilon}+\mathcal{I}_{2,\varepsilon}\}_{\varepsilon>0}$ is bounded in $\mathrm W^{-1,\infty}_{\mathrm{loc}}(\R_+\times \R)$. By \cref{lm:Murat}, to conclude \cref{eq:goal-lemma} it suffices to show
\[
\{\mathcal{I}_{1,\varepsilon}\}_{\varepsilon>0}\;\text{is relatively compact in }\mathrm W^{-1,2}_{\mathrm{loc}}(\R_+\times \R),
\qquad
\{\mathcal{I}_{2,\varepsilon}\}_{\varepsilon>0}\;\text{is bounded in }\mathrm L^1_{\mathrm{loc}}(\R_+\times \R).
\]

\medskip\noindent\uline{Step 2.} \emph{$L^1$-bound for $\{\mathcal{I}_{2,\varepsilon}\}_{\varepsilon>0}$.}
Since \(P(\xi)=\xi\,\eta'(\xi)-\eta(\xi)\) and \(P'(\xi)=\xi\,\eta''(\xi)\), by the mean value theorem there exists \(\zeta_\varepsilon\) between \(u_\varepsilon\) and \(w_\varepsilon\) such that \[P(w_\varepsilon)-P(u_\varepsilon)=\zeta_\varepsilon\,\eta''(\zeta_\varepsilon)\,(w_\varepsilon-u_\varepsilon).\]
Therefore, using the fact that $0 \le u_\varepsilon, \, w_\varepsilon \le 1$ and $\eta \in \mathrm C^2([0,1])$, we have 
\begin{align*}
&\int_0^T\int_{\R} \frac{1}{\varepsilon} V'(w_\varepsilon)\big(P(w_\varepsilon)-P(u_\varepsilon)\big)(w_\varepsilon-u_\varepsilon)\,\mathrm{d}x\,\mathrm{d}t
\\&\le \int_0^T \int_{\mathbb R}  \frac{1}{\varepsilon} V'(w_\varepsilon) \zeta_\varepsilon\,\eta''(\zeta_\varepsilon)\,(w_\varepsilon-u_\varepsilon) ^2 \,\mathrm{d}x\,\mathrm{d}t
\\ &\le C \int_0^T \int_{\mathbb R} \frac{1}{\varepsilon} V'(w_\varepsilon) (u_\varepsilon + w_\varepsilon)(w_\varepsilon-u_\varepsilon) ^2 \,\mathrm{d}x\,\mathrm{d}t
\\ &\le C\, C_0,
\end{align*}
where in the last line we recalled \cref{eq:diss-quadratic-exp}.

\medskip\noindent\uline{Step 3.} \emph{Compactness of $\{\mathcal{I}_{1,\varepsilon}\}_{\varepsilon>0}$.}
First, since $V'(\xi)\le -v_\ast<0$ on $[0,1]$, \cref{eq:diss-quadratic-exp} yields
\begin{align}\label{eq:improved-diss-exp-vast}
\int_0^T \int_\R \frac{v_\ast}{2\varepsilon}\,(u_\varepsilon-w_\varepsilon)^2\,
(u_\varepsilon+w_\varepsilon)\,\mathrm{d}x\,\mathrm{d}t \le C_0.
\end{align}
Let us assume that $\eta(0) = 0$, so that $\eta(\xi) \le C\,|\xi|$.\footnote{~Owing to the nonlocality, \cref{eq:cl} is not necessarily translation invariant; however, here we may assume $\eta(0) = 0$  because, as observed in \cite{MR1073959}, it suffices to work with the particular entropy $\eta(\xi)\coloneqq \xi\,V(\xi)$ for \cref{prop:cc} to hold.} 

By the mean value theorem, there exist points \(\xi_\varepsilon,\,\zeta_\varepsilon\) between \(u_\varepsilon\) and \(w_\varepsilon\) such that
\begin{align*}
V(u_\varepsilon)-V(w_\varepsilon) &= V'(\zeta_\varepsilon)(u_\varepsilon - w_\varepsilon),
\\
Q(u_\varepsilon) - Q(w_\varepsilon) &= \int_{w_\varepsilon}^{u_\varepsilon} P(\xi)V'(\xi)\,\mathrm d\xi = P(\xi_\varepsilon)V'(\xi_\varepsilon)(u_\varepsilon - w_\varepsilon).
\end{align*}
Hence,
\[
\bigl|\eta(u_\varepsilon)\bigl(V(u_\varepsilon)-V(w_\varepsilon)\bigr)
+Q(u_\varepsilon)-Q(w_\varepsilon)\bigr|
= |u_\varepsilon - w_\varepsilon|\,
\bigl|V'(\zeta_\varepsilon)\eta(u_\varepsilon) + V'(\xi_\varepsilon)P(\xi_\varepsilon)\bigr|.
\]
Moreover, we estimate 
\begin{align*}
|\eta(u_\varepsilon)| &\le C u_\varepsilon \le C(u_\varepsilon + w_\varepsilon), \\
|P(\xi_\varepsilon)| &= |\xi_\varepsilon \, \eta'(\xi_\varepsilon) - \eta(\xi_\varepsilon)| 
\le |\xi_\varepsilon\eta'(\xi_\varepsilon)| + |\eta(\xi_\varepsilon)|
\le C|\xi_\varepsilon|
\le C(u_\varepsilon + w_\varepsilon),
\end{align*}
and deduce 
\[
\bigl|\eta(u_\varepsilon)\bigl(V(u_\varepsilon)-V(w_\varepsilon)\bigr)
+Q(u_\varepsilon)-Q(w_\varepsilon)\bigr|
\le C\, \|V\|_{\mathrm{L}^\infty([0,1])} |u_\varepsilon - w_\varepsilon|
(u_\varepsilon + w_\varepsilon).
\]

Taking squares, integrating over $(0,\infty)\times\R$, and recalling \cref{eq:improved-diss-exp-vast} yield
\begin{align*}
\int_0^T\!\!\int_{\R}
\bigl|\eta(u_\varepsilon)\bigl(V(u_\varepsilon)-V(w_\varepsilon)\bigr)
+Q(u_\varepsilon)-Q(w_\varepsilon)\bigr|^2 \mathrm{d}x\,\mathrm{d}t
&\le C\int_0^T\!\!\int_{\R}
(u_\varepsilon+w_\varepsilon)\,(u_\varepsilon-w_\varepsilon)^2 \mathrm{d}x\,\mathrm{d}t \\
&\le C\,\varepsilon \longrightarrow 0 \quad \text{as }\varepsilon\searrow 0.
\end{align*}

\medskip\noindent\uline{Step 4.} \emph{Conclusion.}
From Steps 2--3, $\{\mathcal{I}_{1,\varepsilon}\}_{\varepsilon >0}$ is relatively compact in $\mathrm W^{-1,2}_{\mathrm{loc}}(\R_+\times \R)$ and $\{\mathcal{I}_{2,\varepsilon}\}_{\varepsilon >0}$ is bounded in $\mathrm{L}^1_{\mathrm{loc}}(\R_+\times \R)$. By \cref{lm:Murat}, \cref{eq:goal-lemma} follows, completing the proof.
\end{proof}

\section{Proof of \texorpdfstring{\cref{th:main}}{Main Theorem} and \texorpdfstring{\cref{th:convergence}}{Nonlocal-to-local limit}}
\label{sec:proof}

Putting together the results in \crefrange{sec:constant}{sec:exp}, we conclude the proof of \cref{th:main}. As a corollary, owing to \cref{prop:cc} and \cite[Theorem 1.2]{2206.03949}, we can also prove \cref{th:convergence}.

\begin{proof}[Proof of \cref{th:convergence}]
Under the assumptions of \cref{it:constant} or \cref{it:derivative}, we can apply \cref{prop:cc} to the family $\{w_\varepsilon\}_{\varepsilon >0}$ and obtain that (up to subsequences)
\[
w_{\varepsilon} \to u \quad \text{strongly in $\mathrm L^p_{\mathrm{loc}}(\R_+ \times \R)$, for every $1 \le p < \infty$},
\]
for some $u \in \mathrm L^\infty(\R_+ \times \R)$. 

Under the assumptions of \cref{it:exp}, we can apply \cref{prop:cc} to the family $\{u_\varepsilon\}_{\varepsilon >0}$ as well and deduce that (up to subsequences)
\[
w_{\varepsilon},\, u_{\varepsilon} \to u \quad \text{strongly in $\mathrm L^p_{\mathrm{loc}}(\R_+ \times \R)$, for every $1 \le p < \infty$}.
\]

By \cite[Theorem 1.2]{2206.03949}, the limit function $u$ is in fact the unique entropy-admissible solution of the limit problem \cref{eq:cl-l}.

As a consequence of the uniqueness of entropy solutions, we deduce (owing to Uryshon's subsequence principle) that the whole family (not just up to subsequences) converges.

This completes the proofs of \cref{it:conv-w} and \cref{it:conv-u}.
\end{proof}

\begin{remark}[Convergence of $\{u_\varepsilon\}_{\varepsilon>0}$ in the case $\gamma(\cdot) \coloneqq \mathds{1}_{]-\infty,0]}(\cdot)\exp(\cdot)$]\label{rk:conv-u}
In \cref{it:conv-u}, we relied on \cref{it:exp} to prove the convergence of $\{u_\varepsilon\}_{\varepsilon >0}$ to the (unique) entropy solution $u$ of the Cauchy problem \cref{eq:cl-l}. 

However, this convergence can also be established without using the additional assumption on $V$ needed in \cref{it:exp}, which was required there to prove the compactness of the entropy production of $\{u_\varepsilon\}_{\varepsilon>0}$. The alternative argument relies instead on \cref{it:conv-w}, the identity \cref{eq:exp-identity}, and the total variation estimate proved in \cite[Theorems~3.2 and~4.2]{MR4651679}. 

Since $u_0$ satisfies \cref{ass:u0}, the function $w_\varepsilon(0,\cdot) = \gamma_\eps \ast u_0$ fulfills
\[
\mathrm{TV}\big(w_\varepsilon(0,\cdot)\big)
\leq \varepsilon^{-1}\,\mathrm{TV}(\gamma)\, \|u_0\|_{\mathrm L^\infty(\R)},
\]
(i.\,e., its total variation is finite for a fixed $\varepsilon>0$, but blows up as $\varepsilon \searrow 0$).
By the total variation estimate in \cite[Theorems~3.2 and~4.2]{MR4651679}, we then have
\[
\mathrm{TV}\big(w_\varepsilon(t,\cdot)\big) \leq \mathrm{TV}\big(w_\varepsilon(0,\cdot)\big) .
\]
Multiplying both sides by $\varepsilon$ and using the relation $\varepsilon\,\partial_x w_\varepsilon = w_\varepsilon - u_\varepsilon$ yields
\begin{align}\label{eq:dist-est}
\begin{aligned}
\|w_\varepsilon(t,\cdot)-u_\varepsilon(t,\cdot)\|_{\mathrm{L}^1(\R)} 
&= \varepsilon\, \mathrm{TV}(w_\varepsilon(t,\cdot)) \\
&\le \varepsilon\, \mathrm{TV}(w_\varepsilon(0,\cdot)) 
\\ &= \|w_\varepsilon(0,\cdot)-u_\varepsilon(0,\cdot)\|_{\mathrm{L}^1(\R)}
\\ &= \|u_0 \ast \gamma_\varepsilon - u_0\|_{\mathrm{L}^1(\R)} 
\to 0 \quad \text{as } \varepsilon \searrow 0.
\end{aligned}
\end{align}
Since, by \cref{it:conv-w}, the family $\{w_\varepsilon\}_{\varepsilon >0}$ is already known to converge strongly in $\mathrm{L}^p_{\mathrm{loc}}(\R_+\times\R)$, for every $1 \le p < \infty$, to the (unique) entropy solution $u$ of \cref{eq:cl-l}, estimate \cref{eq:dist-est} shows that $\{u_\varepsilon\}_{\varepsilon >0}$ converges to the same limit, thus proving \cref{it:conv-u} without invoking \cref{it:exp}.

More generally, we address the question of deducing the convergence of the family $\{u_\varepsilon\}_{\varepsilon >0}$ from that of $\{w_\varepsilon\}_{\varepsilon >0}$ in \cite{DeNittiKuang2025V} using a Fourier approach.
\end{remark}

\section{Conclusions and future directions}
\label{sec:conclusion}

In this work, we relied on compensated compactness to address a long-standing open problem in the theory of nonlocal conservation laws: the localization singular limit in the presence of kernels that do not satisfy convexity assumptions and initial data with unbounded variation. In particular, our analysis covers kernels that are piecewise constant or strictly monotone. It remains open to study convergence results for kernel satisfying only \cref{ass:gamma}.

In forthcoming works, we will leverage these techniques to analyze the convergence of \textit{asymptotically compatible numerical schemes} (see, e.\,g., \cite{zbMATH05813035,zbMATH05533163} for applications of compensated compactness to the convergence analysis of finite-volumes schemes for local conservation laws).

We are also interested in quantifying compensated compactness to establish regularity results in the spirit of \cite{MR2605213,MR2730802}.

Finally, a promising direction for future research is to exploit Schonbek's $\mathrm L^p$ compensated compactness theory (see \cite{Sch}) to treat nonlocal models where $\mathrm L^\infty$-bounds fail (cf.~\cite{KEIMER2023}).

\vspace{0.5cm}

\section*{Acknowledgments}

G.~M.~Coclite is a member of the Gruppo Nazionale per l'Analisi Matematica, la Probabilità e le loro Applicazioni (GNAMPA) of the Istituto Nazionale di Alta Matematica (INdAM). He has been partially supported by the project funded under the National Recovery and Resilience Plan (NRRP), Mission~4, Component~2, Investment~1.4 (Call for tender No.~3138 of 16/12/2021) of the Italian Ministry of University and Research, funded by the European Union (NextGenerationEU Award No.~CN000023, Concession Decree No.~1033 of 17/06/2022) and adopted by the Italian Ministry of University and Research (CUP~D93C22000410001), Centro Nazionale per la Mobilit\`a Sostenibile. He has also been supported by the Italian Ministry of Education, University and Research under the programme ``Department of Excellence'' Legge~232/2016 (CUP~D93C23000100001), and by the Research Project of National Relevance ``Evolution problems involving interacting scales'' granted by the Italian Ministry of Education, University and Research (MUR--PRIN~2022, project code~2022M9BKBC, CUP~D53D23005880006).

N.~De~Nitti is a member of the Gruppo Nazionale per l'Analisi Matematica, la Probabilità e le loro Applicazioni (GNAMPA) of the Istituto Nazionale di Alta Matematica (INdAM).

K.~Huang was supported by a Direct Grant of Research (2024/25) from The Chinese University of Hong Kong.

We thank M.~Colombo and L.~Spinolo for their helpful and constructive comments.

\vspace{0.5cm}

\printbibliography

\vfill 

\end{document}